\newlist{enumarabic}{enumerate}{1}
\setlist[enumarabic]{font=\normalfont,label=(\arabic*),leftmargin=0.3in}
\newlist{enumroman}{enumerate}{1}
\setlist[enumroman]{font=\normalfont,label=(\roman*),leftmargin=0.3in}
\def\N{\mathbb N}
\def\Z{\mathbb Z}
\def\C{\mathbb C}
\def\ZZ{\mathcal Z}
\def\HH{H\mkern -2mu H}
\let\phi\varphi
\let\epsilon\varepsilon
\def\deg#1{|#1|}
\DeclareMathOperator{\length}{len}
\DeclareMathOperator{\sign}{sign}
\DeclareMathOperator{\Tor}{Tor}
\DeclareMathOperator{\Hom}{Hom}
\DeclareMathOperator*{\colim}{colim}
\let\shuffle\nabla
\def\AW{AW}
\def\AWhat{\skew5\widehat{\AW}}
\def\AWu#1{\AW_{\mkern -1mu #1}}
\def\AWhatu#1{\AWhat_{\mkern -2mu #1}}
\def\AWtilde{\skew4\widetilde{\AW}}
\def\AWtildeu#1{\AWtilde_{\mkern -3mu #1}}
\def\transposition#1#2{\tau_{#1}\cdot}
\def\cupi{\mathbin{\cup_{i}}}
\def\cupone{\mathbin{\cup_{1}}}
\def\cuptwo{\mathbin{\cup_{2}}}
\def\tilded{\tilde d}
\def\tildecup{\mathbin{\tilde\cup}}
\def\tildecupi{\mathbin{\tildecup_{i}}}
\def\tildecupone{\mathbin{\tildecup_{1}}}
\def\tildecuptwo{\mathbin{\tildecup_{2}}}
\def\kk{\Bbbk}
\def\Ll{\boldsymbol\Lambda}
\def\Sl{\mathbf S}
\def\Kl{\mathbf K}
\def\Su{\Sl^*}
\def\DJ{DJ}
\def\CP{\mathbb{CP}}
\DeclareMathOperator{\perm}{perm}
\DeclareMathOperator{\pos}{pos}
\def\ff{F}
\def\ffbar{f}
\def\susp{\mathrm{s}}
\def\desusp{\susp^{-1}}
\def\Simp{\mathcal S}
\def\SimpOne#1{\Simp'(#1)}
\def\SimpDJ{DJ}
\def\TopDJ{\mathcal{DJ}}
\def\transp#1{#1^{*}}
\let\transpp\transp
\let\newterm\emph
\newcommand*\xbar[1]{%
   \hbox{%
     \vbox{%
       \hrule height 0.35pt 
       \kern0.35ex
       \hbox{%
         \kern-0.15em
         \ensuremath{#1}%
         \kern-0.15em
       }%
     }%
   }%
}
\def\EE{\mathbf{E}}
\def\setone#1{\underline{#1}}
\def\setzero#1{[#1]}
\def\BB{\mathbf{B}}
\def\BBone{\mathbf{1}}
\def\refhomog#1{\ref*{h@#1}}
\def\eqrefhomog#1{\textup{\tagform@{\ref*{h@#1}}}}
\def\citehomog#1{\cite[#1]{Franz:homog}}
\def\cf{\emph{cf.}}
\def\arxiv#1{\href{http://arxiv.org/abs/#1}{\texttt{arXiv:#1}}}
\let\osubsection\subsection
\def\subsection#1{\osubsection{\boldmath{#1}}}
\theoremstyle{plain}
\newtheorem{theorem}{Theorem}[section]
\newtheorem{proposition}[theorem]{Proposition}
\newtheorem{lemma}[theorem]{Lemma}
\newtheorem{corollary}[theorem]{Corollary}
\newtheorem{addendum}[theorem]{Addendum}
\theoremstyle{definition}
\newtheorem{remark}[theorem]{Remark}
\newtheorem{example}[theorem]{Example}
\theoremstyle{remark}
\newtheorem*{acknowledgements}{Acknowledgements}
\numberwithin{equation}{section}
\begin{document}

\title[Homotopy Gerstenhaber formality]%
  {Homotopy Gerstenhaber formality\\of Davis--Januszkiewicz spaces}
\author{Matthias Franz}
\thanks{The author was supported by an NSERC Discovery Grant.}
\address{Department of Mathematics, University of Western Ontario,
  London, Ont.\ N6A\;5B7, Canada}
\email{mfranz@uwo.ca}

\subjclass[2020]{Primary 57S12; secondary 16E45, 55P35}

\begin{abstract}
  A homotopy Gerstenhaber structure on a differential graded algebra
  is essentially a family of operations defining a multiplication on its bar construction.
  We prove that the normalized singular cochain algebra of a Davis--Janusz\-kie\-wicz space
  is formal as a homotopy Gerstenhaber algebra, for any coefficient ring.
  This generalizes a recent result by the author about classifying spaces of tori
  and also strengthens the well-known dga formality result for Davis--Janusz\-kie\-wicz spaces
  due to the author and Notbohm--Ray.
  As an application, we determine the cohomology rings
  of free and based loop spaces of Davis--Janusz\-kie\-wicz spaces.  
\end{abstract}

\maketitle

\section{Introduction}

Consider the classifying space~\(BT\) of a torus~\(T=(S^{1})^{n}\).
We write \(C^{*}(BT)\) for the differential graded algebra (dga)
of normalized singular cochains on~\(BT\) with coefficients in some commutative ring~\(\kk\).
Given that its cohomology~\(H^{*}(BT)\) is a polynomial algebra,
it is not difficult to show that \(C^{*}(BT)\) is formal in the sense
that it is quasi-isomorphic to~\(H^{*}(BT)\),
considered as a dga with trivial differential.
In 1974, Gugenheim and May~\cite[Thm.~4.1]{GugenheimMay:1974}
significantly sharpened this observation:

\begin{theorem}[Gugenheim--May]
  There is a quasi-isomorphism of dgas~\(C^{*}(BT) \to H^{*}(BT)\)
  annihilating all \(\cupone\)-products in~\(C^{*}(BT)\).
\end{theorem}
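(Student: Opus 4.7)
The plan is to reduce to the case of a single circle and then build the map explicitly on a small model of $BS^{1}$. Writing $T = (S^{1})^{n}$, so $BT = (BS^{1})^{n}$, the Eilenberg--MacLane cross product gives a quasi-isomorphism of dgas $C^{*}(BS^{1})^{\otimes n} \to C^{*}(BT)$. A tensor product $\phi_{1}\otimes\cdots\otimes\phi_{n}$ of dga quasi-isomorphisms $\phi_{i}\colon C^{*}(BS^{1}) \to \kk[x_{i}]$ then yields a dga map to $H^{*}(BT) = \kk[x_{1},\dots,x_{n}]$. To arrange that the resulting composite annihilates $\cupone$-products on all of $C^{*}(BT)$, I would invoke the standard comparison formulas relating $\cupone$ on a product of simplicial sets to $\cup$ and $\cupone$ on the factors; provided each $\phi_{i}$ kills $\cupone$-products individually, this vanishing is preserved by the external tensor product.

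For the single circle, I would model $BS^{1} = \CP^{\infty}$ by a convenient small simplicial abelian group~$X$, for example the Eilenberg--MacLane model $K(\Z,2)$ or the iterated bar construction $B(B\Z)$. In such a model one can write down explicit cocycle representatives $\omega_{n} \in C^{2n}(X)$ of the classes~$x^{n}$. The candidate map $\phi\colon C^{*}(X) \to \kk[x]$ is then defined on a basis by sending $\omega_{n} \mapsto x^{n}$ and every other basis element to~$0$. That $\phi$ is a chain map follows from the cocycle property of the~$\omega_{n}$; that it is multiplicative amounts to $\omega_{n} \cup \omega_{m} \equiv \omega_{n+m}$ modulo the kernel of~$\phi$, which one expects in a simplicial abelian group model because the simplicial structure tightly constrains the cup product.

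The hardest step will be showing $\phi(a \cupone b) = 0$ for arbitrary cochains~$a$, $b$, not merely for cocycles; this is the condition distinguishing the theorem from plain dga formality. The idea is to choose the basis of $C^{*}(X)$ so that every $\cupone$-product expands in terms of basis elements other than the~$\omega_{n}$. In the simplicial abelian group setting, $\cupone$ has a highly restricted form---it measures the failure of cocommutativity of the diagonal, which here is a map of abelian groups---so its image should lie in an explicitly identifiable subcomplex disjoint from the span of the~$\omega_{n}$. If this direct combinatorial verification proves too rigid, a fallback is a homological perturbation argument: start from any chain-level splitting $H^{*}(BT) \hookrightarrow C^{*}(BT)$ and inductively adjust it to kill $\cupone$-products, using that the obstructions vanish in the relevant positive cohomological degrees.
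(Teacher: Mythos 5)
Your reduction to a single circle contains the crucial unproved step, and it is exactly where the difficulty of the theorem lies. First, a direction issue: the cochain cross product goes $C^{*}(BS^{1})^{\otimes n}\to C^{*}(BT)$, so to get a map \emph{out of} $C^{*}(BT)$ you must use the transpose of the shuffle map $\shuffle\colon C(BS^{1})^{\otimes n}\to C(BT)$; that is indeed a dga map, so the dga part of your plan can be repaired. But your key claim --- that ``standard comparison formulas'' relate $\cupone$ on a product of simplicial sets to $\cup$ and $\cupone$ on the factors, so that vanishing of $\cupone$-products is preserved under the external tensor product --- has no strict chain-level justification. The dual of the shuffle (or Alexander--Whitney) map is \emph{not} a morphism of hgas, and the interaction of $\transpp{\shuffle}$ with $\cupone=-\transpp{\AWu{(1,2,1)}}$ produces mixed correction terms that are not visibly killed by $\phi_{1}\otimes\cdots\otimes\phi_{n}$; the chain-level Cartan-type relations for $\cup_{i}$-products on products of spaces hold only up to explicit homotopies. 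This is precisely the point the paper has to work for: it proves a carefully limited commutation statement (\Cref{thm:AW-shuffle}, only for the modified operations $\AWtildeu{u}$ attached to $1$-biased surjections, with the group factor projected out), uses it to get $C(T)$-equivariance of $\AWhatu{u}$ (\Cref{thm:AWtildeu-equiv}), and combines this with a recursively defined equivariant map $\ff\colon\Kl\to C(ET)$ built from the contracting homotopy $S$ together with \Cref{thm:AW-S} to obtain the vanishing (\Cref{thm:AWhatu-f-zero}, \Cref{thm:formality-BT}). Without an argument of this kind, the tensor-product step of your proposal does not go through.

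The single-circle step is also underdeveloped: sending chosen cocycles $\omega_{n}\mapsto x^{n}$ and ``all other basis elements to $0$'' is a chain map only if the complementary span is $d$-stable, multiplicativity requires $\omega_{n}\cup\omega_{m}-\omega_{n+m}\in\ker\phi$, and annihilating $\cupone$ of \emph{arbitrary} cochains is a much stronger condition than anything forced by the simplicial abelian group structure --- the Alexander--Whitney diagonal is not cocommutative on $C(B(B\Z))$, so the image of $\cupone$ does not lie in an obvious complement of the $\omega_{n}$. Finally, the fallback ``perturb a splitting so that the obstructions vanish'' names no obstruction theory: there is no known cohomological obstruction whose vanishing produces a dga map killing all $\cupone$-products, and the paper instead obtains this by the explicit construction above rather than by any perturbation argument.
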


Up to sign, the \(\cupone\)-product is nothing but the
first operation of the homotopy Gerstenhaber structure on~\(C^{*}(BT)\)
constructed by Baues~\cite{Baues:1981}.
Recall that a homotopy Gerstenhaber algebra (hga)
is essentially a dga
equipped with operations
\begin{equation}
  E_{k}\colon A\otimes A^{\otimes k} \to A,
  \quad
  (a,b_{1},\dots,b_{k})\mapsto E_{k}(a;b_{1},\dots,b_{k})
\end{equation}
investing \(\BB A\), the reduced bar construction of~\(A\),
with an associative multiplication compatible with the diagonal.
Besides the Hochschild cochains of an associative algebra,
the cochain algebra~\(C^{*}(X)\) of a space~\(X\)
is the main example of an hga.
Any graded commutative dga is canonically an hga with trivial operations~\(E_{k}\)
for~\(k\ge1\).
We say that an hga~\(A\) is formal if it is quasi-isomorphic to its cohomology as an hga.

Recently we have
generalized the Gugenheim--May result as follows \citehomog{Thm.~\refhomog{thm:intro:hga-formality}}.

\begin{theorem}
  \label{thm:quiso-BT}
  There is a quasi-isomorphism of hgas \(C^{*}(BT)\to H^{*}(BT)\).
  In particular, \(C^{*}(BT)\) is formal as an hga.
\end{theorem}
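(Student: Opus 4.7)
The strategy is to exhibit a polynomial subalgebra of $C^*(BT)$ on which the entire hga structure is trivial, and then construct a retraction onto it.

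First, I would model $BT$ as a simplicial abelian group, for instance via the bar construction $\overline{W}T$. On such a model, $C^*(BT)$ carries additional rigidity beyond the generic hga structure on cochains of an arbitrary space. Using this, I would construct explicit degree-two cocycles $\tilde x_{1}, \dots, \tilde x_{n} \in C^{2}(BT)$ representing a basis of $H^{2}(BT)$, chosen so that the cup-product subalgebra $A \subset C^*(BT)$ they generate is a polynomial algebra $\kk[\tilde x_{1},\dots,\tilde x_{n}]$ mapping isomorphically onto $H^*(BT)$ under the quotient $C^*(BT) \to H^*(BT)$.

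Second, I would verify the central vanishing property: $E_{k}(a;b_{1},\dots,b_{k}) = 0$ for all $k \geq 1$ and all $a,b_{1},\dots,b_{k} \in A$. This generalizes the Gugenheim--May calculation, which handles only the case $k=1$ (annihilation of $\cupone$-products). I expect this to follow from an analysis of the surjection-operad formulas defining the hga structure, using that the coalgebra structure on $C_{*}(BT)$ arising from the abelian group multiplication is cocommutative and interacts controllably with the higher coproducts that define the $E_{k}$. Once $A$ is known to be a sub-hga with trivial higher operations, the inclusion $A \hookrightarrow C^*(BT)$ is automatically an hga quasi-isomorphism.

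Finally, to obtain a map in the stated direction $C^*(BT) \to H^*(BT)$ rather than merely an hga quasi-isomorphism from $A$ into $C^*(BT)$, I would construct a chain-level retraction $\pi\colon C^*(BT) \to A$ along an explicit complement. The main obstacle will be ensuring that $\pi$ is an hga morphism, not merely a dga morphism: this requires the complement to be preserved by every operation $E_{k}$ simultaneously, a condition involving infinitely many constraints. I would attempt this either by an explicit simplicial construction that exploits the full rigidity of the chosen simplicial abelian group model, or by an obstruction-theoretic argument in which the relevant obstruction groups vanish because $H^*(BT)$ is a free graded-commutative algebra on even generators. This final step is where I expect most of the technical work to lie, since the classical Gugenheim--May argument already solves the $k=1$ case by an acyclic models technique, and upgrading it to all $k$ requires a uniform treatment of the full tower of operations.
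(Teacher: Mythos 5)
There are two genuine gaps, and they sit exactly where the real content of the theorem lies. First, your steps 2--3 (a strictly polynomial subalgebra \(A\subset C^{*}(BT)\) of cocycles on which every \(E_{k}\) vanishes) are asserted rather than proved, and the mechanism you invoke is off target: the abelian multiplication of the simplicial group model induces a \emph{commutative product} (the Pontryagin\slash shuffle product) on \(C(BT)\), not a cocommutative coproduct; the coproduct governing the cup product and the operations \(E_{k}\) is the Alexander--Whitney diagonal, which is not cocommutative for any model. Already the first requirement is delicate: cocycles pulled back from different circle factors do not commute on the nose (their commutator is only a coboundary, \cf~\eqref{eq:cup-1}), and the cochain-level cross product is not multiplicative, so even producing a commutative polynomial subalgebra over an arbitrary \(\kk\) needs an argument. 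The ``controllable interaction'' between the group structure and the interval-cut operations that you hope for is precisely the hard new input of the paper, \Cref{thm:AW-shuffle} (compatibility of \(1\)-biased interval cuts with the shuffle map), and it is used there not to produce a sub-hga of cochains but to prove, by induction with the contracting homotopy \(S\) (\Cref{thm:AW-S}, \Cref{thm:AWhatu-f-zero}), that all strongly biased operations \(\AWu{u}\) vanish on the image of an explicit \(H(T)\)-equivariant chain-level map \(\ffbar\colon H(BT)\to C(BT)\).

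Second, and more seriously, your step 5 is essentially the theorem itself in disguise: a retraction \(\pi\colon C^{*}(BT)\to A\cong H^{*}(BT)\) is an hga morphism precisely when it is a dga map annihilating \(E_{k}(a;b_{1},\dots,b_{k})\) for \emph{all} cochains, which is the statement to be proved, and neither of your two proposed routes delivers it. The obstruction-theoretic claim that the relevant obstruction groups vanish ``because \(H^{*}(BT)\) is free graded-commutative on even generators'' has no basis over a general \(\kk\): freeness as a commutative algebra gives no cofibrancy in the category of associative dgas or hgas integrally, and the natural candidate obstruction groups (of Hochschild type for a polynomial algebra) are far from zero. This is exactly the kind of argument that works over \(\Q\) and fails over \(\Z\) or in positive characteristic, which is why Gugenheim--May already needed a genuine construction for the \(k=1\) case. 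The paper sidesteps both of your problematic steps by working on chains and dualizing: the recursively defined dgc map \(\ff\colon\Kl\to C(ET)\) descends to \(\ffbar\), all strongly biased interval cuts kill its image (\Cref{thm:formality-BT}), and the transpose \(\transpp{\ffbar}\colon C^{*}(BT)\to H^{*}(BT)\) is then automatically a quasi-isomorphism of hgas in the stated direction, with the naturality needed later for Davis--Januszkiewicz spaces --- no strict commutativity of cochain representatives and no hga retraction are ever required.
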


In \Cref{sec:formality-BT} of the present paper we give a different proof of \Cref{thm:quiso-BT}.
It is based on a result which is of independent interest,
namely that certain interval cut operations commute with the shuffle map, see \Cref{thm:AW-shuffle}.

\smallskip

Let \(\Sigma\) be a simplicial complex on the finite vertex set~\(V\).
For each simplex~\(\sigma\in\Sigma\) we define
the Cartesian product~\(\DJ_{\sigma}=(\CP^{\infty})^{\sigma}\)
with factors indexed by~\(\sigma\).
By choosing a basepoint in~\(\CP^{\infty}\), we can consider \(\DJ_{\sigma}\)
as a subspace of~\((\CP^{\infty})^{V}\),
which is the classifying space of the torus~\(T=(S^{1})^{V}\).
The Davis--Janusz\-kie\-wicz space
\begin{equation}
  \DJ_{\Sigma} = \bigcup_{\sigma\in\Sigma}\DJ_{\sigma}
\end{equation}
is of great importance in toric topology,
\cf~\cite[Sec.~4.3]{BuchstaberPanov:2015} or~\cite{NotbohmRay:2005}.
Davis--Janusz\-kie\-wicz spaces are homotopy-equivalent to Borel constructions of
moment-angle complexes and of smooth toric varieties.

The cohomology algebra of~\(\DJ_{\Sigma}\)
is isomorphic to the evenly graded Stanley--Reisner ring~\(\kk[\Sigma]\).
It is known that Davis--Janusz\-kie\-wicz spaces are formal over any~\(\kk\).
This was established in the author's doctoral dissertation~\cite[Thm.~3.3.2]{Franz:2001},
see also~\cite[Thm.~1.1]{Franz:2003a},~\cite[Thm.~1.4]{Franz:2006},
and independently by Notbohm--Ray~\cite[Thm.~4.8]{NotbohmRay:2005}.\footnote{%
  Contrary to the claim made in~\cite[p.~43]{NotbohmRay:2005}, Notbohm--Ray's result is not more general
  since every simplicial complex canonically defines an open toric subvariety of complex affine space.}
From \Cref{thm:quiso-BT} we deduce:

\begin{theorem}
  \label{thm:DJ-formal-intro}
  There is a quasi-isomorphism of hgas \(C^{*}(DJ_{\Sigma})\to H^{*}(DJ_{\Sigma})\).
  In particular, \(C^{*}(DJ_{\Sigma})\) is formal as an hga.
\end{theorem}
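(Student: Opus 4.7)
The plan is to build the hga quasi-isomorphism $\psi\colon C^{*}(\DJ_{\Sigma})\to H^{*}(\DJ_{\Sigma}) = \kk[\Sigma]$ by factoring the map $\phi\colon C^{*}(BT)\to \kk[V]$ of \Cref{thm:quiso-BT} through the surjective cochain restriction $C^{*}(BT)\twoheadrightarrow C^{*}(\DJ_{\Sigma})$ coming from the closed inclusion $\DJ_{\Sigma}\hookrightarrow BT = (\CP^{\infty})^{V}$.

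Concretely, for each face $\sigma\in\Sigma$ I would apply \Cref{thm:quiso-BT} to the coordinate subtorus $T_{\sigma} = (S^{1})^{\sigma}$ to obtain an hga quasi-isomorphism $\phi_{\sigma}\colon C^{*}(\DJ_{\sigma})\to\kk[\sigma]$. The delicate point is to verify, using the explicit construction in \Cref{sec:formality-BT}, that the $\phi_{\sigma}$ can be chosen compatibly with $\phi$, in the sense that $\pi_{\sigma}\circ\phi = \phi_{\sigma}\circ r_{\sigma}$, where $r_{\sigma}\colon C^{*}(BT)\to C^{*}(\DJ_{\sigma})$ is cochain restriction and $\pi_{\sigma}\colon \kk[V]\to\kk[\sigma]$ sends the variables outside $\sigma$ to zero. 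Granted this compatibility, I would next observe that a monomial $x_{v_{1}}\cdots x_{v_{k}}\in\kk[V]$ lies in $\bigcap_{\sigma\in\Sigma}\ker\pi_{\sigma}$ iff $\{v_{1},\dots,v_{k}\}$ is not a face of $\Sigma$, so
\begin{equation*}
  I_{\Sigma} \;=\; \bigcap_{\sigma\in\Sigma}\ker\pi_{\sigma}.
\end{equation*}
For any $c\in C^{*}(BT,\DJ_{\Sigma})$ we have $r_{\sigma}(c)=0$ for every $\sigma$, hence $\pi_{\sigma}(\phi(c))=0$ for every $\sigma$, and therefore $\phi(c)\in I_{\Sigma}$. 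Thus $\phi$ descends to an hga map $\psi\colon C^{*}(\DJ_{\Sigma})\to \kk[\Sigma]$, and the five-lemma applied to the long exact cohomology sequence of the pair $(BT,\DJ_{\Sigma})$ compared with the short exact sequence $0\to I_{\Sigma}\to\kk[V]\to\kk[\Sigma]\to 0$ shows that $\psi$ is a quasi-isomorphism: the middle vertical arrow is an isomorphism by \Cref{thm:quiso-BT}, and the left vertical arrow is induced by the restriction of $\phi$ to $C^{*}(BT,\DJ_{\Sigma})$, which under the identification $H^{*}(BT,\DJ_{\Sigma})\cong I_{\Sigma}$ coming from the pair is forced to be an isomorphism as well.

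The principal obstacle is the naturality assertion $\pi_{\sigma}\circ\phi = \phi_{\sigma}\circ r_{\sigma}$. The abstract statement of \Cref{thm:quiso-BT} only produces a quasi-isomorphism torus by torus and gives no guarantee that these assemble coherently under coordinate-subtorus inclusions. Whether this naturality holds on the nose is a feature of the concrete construction in \Cref{sec:formality-BT}; the fact that that section is built around the compatibility of interval cut operations with the shuffle map (\Cref{thm:AW-shuffle}), and that shuffle maps are manifestly natural with respect to projections in a product of factors, strongly suggests that the required compatibility is present. If strict commutativity should fail, a fallback is to promote the family $\{\phi_{\sigma}\}$ to a homotopy-coherent diagram over the face poset of $\Sigma$ and realize $\psi$ as a map of homotopy limits, using that $\kk[\Sigma] = \lim_{\sigma}\kk[\sigma]$ and that the $\kk[\sigma]\twoheadrightarrow\kk[\tau]$ are surjective so the ordinary limit already computes the homotopy limit.
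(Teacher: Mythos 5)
Your overall strategy --- descending the formality map of \Cref{thm:quiso-BT} through the surjective restriction $C^{*}(BT)\to C^{*}(\DJ_{\Sigma})$ --- is workable and related in spirit to the paper's proof, which instead assembles the maps $\ffbar^{*}_{\sigma}$ over the face poset, identifies $C^{*}(\DJ_{\Sigma})$ with $\lim_{\sigma}C^{*}(\DJ_{\sigma})$ (working simplicially), and then proves the quasi-isomorphism property by induction. The compatibility you single out as the principal obstacle is in fact not one: part~(iii) of \Cref{thm:formality-BT} says precisely that, for representatives chosen inside the circle factors, $\ffbar$ and $\ffbar^{*}$ are natural with respect to coordinatewise inclusions and projections of tori, which gives $\pi_{\sigma}\circ\phi=\phi_{\sigma}\circ r_{\sigma}$ on the nose; your homotopy-coherent fallback is unnecessary. (Two smaller points: the construction of \Cref{sec:formality-BT} lives on the simplicial model of $BT$, so to speak literally about $(\CP^{\infty})^{V}$ you need the comparison of \Cref{thm:DJ-simp-top}; and realizing $\DJ_{\Sigma}$ as a subspace of $BT$ restricts you to simplicial complexes, which suffices for the stated theorem but not for the paper's more general \Cref{thm:formality-DJ-simp}.)

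The genuine gap is the last step. In your map from the short exact sequence $0\to C^{*}(BT,\DJ_{\Sigma})\to C^{*}(BT)\to C^{*}(\DJ_{\Sigma})\to 0$ to $0\to I_{\Sigma}\to\kk[V]\to\kk[\Sigma]\to 0$, the only vertical arrow known to be a quasi-isomorphism is the middle one, and the five lemma gives nothing from that alone; the assertion that the left arrow is ``forced to be an isomorphism'' is unjustified. Worse, the identification $H^{*}(BT,\DJ_{\Sigma})\cong I_{\Sigma}$ you invoke is, via the long exact sequence of the pair and $H^{*}(BT)\cong\kk[V]$, equivalent to the computation $H^{*}(\DJ_{\Sigma})\cong\kk[\Sigma]$ --- which is exactly what is at stake here, and which the paper deliberately reproves rather than assumes. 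To close the gap you must either (a) cite the classical result that the restriction $H^{*}(BT)\to H^{*}(\DJ_{\Sigma})$ is surjective with kernel $I_{\Sigma}$ (Davis--Januszkiewicz, or Buchstaber--Panov); then, since $\psi\circ r$ equals the projection $\kk[V]\to\kk[\Sigma]$ composed with $\phi$ and $H(\phi)$ is the identity, $H(\psi)$ is an isomorphism; or (b) prove it within your argument, most naturally by induction on the number of simplices using the Mayer--Vietoris sequences \eqref{eq:MV-SR} together with their cochain-level counterparts and the five lemma --- which is precisely the induction the paper runs.
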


In fact, this holds more generally for Davis--Janusz\-kie\-wicz spaces defined
from simplicial posets, see \Cref{thm:formality-DJ}.

As an application,
we derive formulas for the cohomology algebras
of based and free loop spaces of Davis--Janusz\-kie\-wicz spaces.
In a companion paper~\cite{Franz:torprod} we use our formality result
to determine the cohomology rings of smooth toric varieties
and of partial quotients of moment-angle complexes.

\begin{acknowledgements}
  I thank Dietrich Notbohm for
  suggesting a simplification of the proof of \Cref{thm:formality-DJ-simp}.
\end{acknowledgements}

\section{Preliminaries}

\subsection{General}

Throughout this paper, \(\kk\) denotes a commutative ring with unit, unless stated otherwise.
All modules and complexes will be over~\(\kk\), as well as all tensor products.
We denote the identity map on any module~\(C\) by~\(1_{C}\).
We also write \(\setone{n}=\{1,\dots,n\}\)
and \(\setzero{n}=\{0,1,\dots,n\}\).

Below we review several notions from homological algebra, mainly to fix notation
and our sign conventions.
The determination of signs will be important for us, for instance
in the proof of \Cref{thm:AW-shuffle}.

\subsection{Graded modules}

We write \(\deg{c}\in\Z\) for the degree of an element~\(c\) of a graded module~\(C\).
If \(B\) and~\(C\) are \(\Z\)-graded, then so is their tensor product~\(B\otimes C\)
as well as the module~\(\Hom(B,C)\) of homogeneous maps.
Recall that a map~\(f\colon B\to C\) has degree~\(n\) if it raises degrees by~\(n\).
We denote by~\(C^{*}=\Hom(C,\kk)\) the dual module of~\(C\).

Signs are governed by the Koszul sign rule, saying that
whenever two elements~\(b\),~\(c\) are transposed, then this produces
the sign~\((-1)^{\deg{b}\deg{c}}\).
For instance, the transposition map is given by
\begin{equation}
  \label{eq:def-transposition}
  T_{B,C}\colon B\otimes C\to C\otimes B,
  \quad
  b\otimes c\mapsto (-1)^{\deg{b}\deg{c}}\, c\otimes b.
\end{equation}

For any graded modules~\(A\),~\(B\),~\(C\),~\(D\), there is the canonical map
\begin{align}
  \label{eq:hom-tensor}
  \begin{split}
    \Hom(A,C)\otimes\Hom(B,D) &\to \Hom(A\otimes B,C\otimes D), \\
    f\otimes g &\mapsto \Bigl( a\otimes b \mapsto (-1)^{\deg{g}\deg{b}} f(a)\otimes g(b) \Bigr),
  \end{split}
\end{align}
which we will suppress from our notation.
It is compatible with composition of maps and satisfies
\begin{equation}
  \label{eq:transposition-maps}
  T_{C,D}\,(f\otimes g) = (-1)^{\deg{f}\deg{g}}\,(g\otimes f)\,T_{A,B}.
\end{equation}

For graded modules~\(B\) and~\(C\), the transpose is the map
\begin{equation}
  \label{eq:def-transpose}
    \Hom(B,C)\to\Hom(C^{*},B^{*}),
  \quad
  f\mapsto \Bigl(\,
  \transpp{f}\colon \gamma \mapsto (-1)^{\deg{f}\deg{\gamma}}\,\gamma\circ f
  \,\Bigr).
\end{equation}
For any~\(f\in\Hom(B,C)\) and~\(g\in \Hom(A,B)\) one has
\begin{equation}
  \transp{(f\circ g)} = (-1)^{\deg{f}\deg{g}}\, \transp{g}\circ\transpp{f},
\end{equation}
in particular
\begin{equation}
  \label{eq:transp-inv}
  \transpp{(f^{-1})} = (-1)^{\deg{f}}\,(\,\transpp{f})^{-1}
\end{equation}
if \(f\) is invertible, as well as
\begin{equation}
  \transp{(f\otimes g)} = \transpp{f}\otimes\transp{g}
  \in \Hom\bigl( (B\otimes D)^{*},(A\otimes C)^{*})
\end{equation}
for any~\(f\in\Hom(A,B)\) and~\(g\in\Hom(C,D)\). Moreover,
\begin{equation}
  \label{eq:transpose-transposition}
  \transp{T_{B,C}} = T_{C^{*},B^{*}}
\end{equation}
for all complexes~\(B\) and~\(C\).
Note that in the last two displayed equations we have
used the identification~\eqref{eq:hom-tensor}.

\subsection{Complexes}

We consider both homological and cohomological complexes, where the degree of the differential
is \(-1\) and~\(+1\), respectively.
A quasi-iso\-mor\-phism of complexes is a chain map inducing an isomorphism in homology,
and analogously for dgas and other dg~objects introduced below.

Let \(B\) and \(C\) be (both homological or cohomological) complexes.
The tensor product~\(B\otimes C\) is a complex
with differential
\begin{equation}
  d_{B\otimes C}=d_{B}\otimes 1+1\otimes d_{C},
\end{equation}
and \(\Hom(B,C)\) is a complex with differential
\begin{equation}
  d(f) = d\circ f - (-1)^{\deg{f}}f\circ d
\end{equation}
for any homogeneous map~\(f\colon B\to C\).
In particular, one has
\begin{equation}
  \label{eq:def-d-dual}
  d_{C^{*}}=-\transp{d_{C}},
\end{equation}
which means that
the differential of an element~\(\gamma\in C^{*}\) is given by
\begin{equation}
  (d\gamma)(c) = - (-1)^{\deg{\gamma}}\,\gamma(dc)
\end{equation}
for any~\(c\in C\).
The transposition map~\eqref{eq:def-transposition},
the identification~\eqref{eq:hom-tensor} and the transpose~\eqref{eq:def-transpose}
are chain maps.

Let \(f\),~\(g\colon B\to C\) be chain maps.
An~\(h\in\Hom_{\pm1}(B,C)\) is a homotopy from~\(f\) to~\(g\) if \(d(h)=d\circ h+h\circ d=g-f\).

The suspension map~\(\susp\colon C\to\susp\,C\) increases the degree of each element by~\(1\).
Following our sign rule, we have
\begin{equation}
  d(\susp\, c) = -\susp(dc)
\end{equation}
for any~\(c\in C\). Note that by~\eqref{eq:transposition-maps} we also have
\begin{equation}
  \label{eq:susp-transposition}
  (\susp\otimes\susp)\,T_{C,C} = - T_{\susp\,C,\susp\,C}\,(\susp\otimes\susp).
\end{equation}
If we think of the dual~\(C^{*}\) of a homological complex as a cohomological complex
(so that the cohomology of a space lives again in positive degrees),
then the degrees in~\((\susp\,C)^{*}\) are again increased by~\(1\) compared to~\(C^{*}\).
We fix the isomorphism~\((\susp\,C)^{*}\cong\susp\,C^{*}\) that makes the diagram
\begin{equation*}
  \begin{tikzcd}
    (\susp\,C)^{*} \arrow{dr}[left,pos=0.6]{\transp{\susp}\;\;} \arrow{rr}{\cong} & & \susp\,C^{*} \arrow{dl}[pos=0.35]{\!\!\!\desusp} \\
    & C^{*}
  \end{tikzcd}
\end{equation*}
commute. Using this identification and~\eqref{eq:transp-inv},  we have
\begin{equation}
  \label{eq:transp-susp}
  \transp{\susp} = \desusp
  \qquad\text{and}\qquad
  \transp{(\desusp)} = - (\,\transp{\susp})^{-1} = -\susp.
\end{equation}

\subsection{Algebras and coalgebras}

A general reference for this material is \cite[Sec.~1]{Munkholm:1974}.

A \newterm{differential graded algebra} (dga) is a graded associative algebra~\(A\) with unit~\(1\in A\)
such that the multiplication map~\(\mu_{A}\colon A\otimes A\to A\) is a chain map.
We sometimes think of the unit as a chain map~\(\eta_{A}\colon\kk\to A\).
An augmentation of~\(A\) is a morphism of dgas~\(\epsilon_{A}\colon A\to\kk\).
A dga~\(A\) is called (graded) commutative if \(\mu_{A}=T_{A,A}\,\mu_{A}\).

The notion of a coalgebra is dual to that of an algebra.
A \newterm{differential graded coalgebra} (dgc) is coassociative coalgebra~\(C\)
with an augmentation~\(\epsilon_{C}\colon C\to\kk\)
such that the diagonal~\(\Delta_{C}\colon C\to C\otimes C\) and \(\epsilon\)
are chain maps. It is called (graded) cocommutative if \(\Delta_{C}=T_{C,C}\,\Delta_{C}\).
A coaugmentation is a dgc map~\(\eta_{C}\colon\kk\to C\).

The dual of a dgc~\(C\) is canonically a dga with structure maps
\begin{equation}
  \mu_{C^{*}}=\transpp{\Delta_{C}}\colon C^{*}\otimes C^{*}\to (C\otimes C)^{*}\to C^{*}
  \qquad\text{and}\qquad
  \eta_{C^{*}} = \transp{\epsilon_{C}}.
\end{equation}

We say that an \(\N\)-graded dga~\(A\) is connected if \(A^{0}=\kk\),
and simply connected if additionally \(A^{1}=0\). The same applies to dgcs.

A \newterm{dg~bialgebra} is a complex~\(A\) that is both a dga and a dgc such
that the dga structure maps are coalgebra maps or, equivalently,
such that the dgc structure maps are algebra maps.
In this case, if \(C\) is a dg~\(A\)-module, then so is \(C\otimes C\)
via the diagonal~\(A\to A\otimes A\).
If \(C\) is additionally a dgc with \(A\)-equivariant structure maps,
we call it an \newterm{\(A\)-dgc}.
Here the \(A\)-action on~\(\kk\) is assumed to be via the augmentation.

Let \(A\) be a dga and \(C\) a dgc. The complex~\(\Hom(C,A)\) is a dga with
unit~\(\eta=\eta_{A}\,\epsilon_{C}\) and product given by
\begin{equation}
  f\cup g = \mu_{A}\,(f\otimes g)\,\Delta_{C}
\end{equation}
for~\(f\),~\(g\in\Hom(C,A)\).
An element~\(t\in\Hom_{\mp1}(C,A)\) such that
\begin{equation}
  d(t) = t\cup t,
  \qquad
  t\,\eta_{C}=0,
  \qquad
  \epsilon_{A}\,t = 0
\end{equation}
is called a \newterm{twisting cochain}.
A twisting cochain has degree~\(-1\) in the homological setting
and \(+1\) in the cohomological setting.

\subsection{Bar construction}

Let \(A\) be an augmented cohomological dga with augmentation ideal~\(\bar A\).
The (reduced) bar construction~\(\BB A\) of~\(A\) is the coaugmented dgc
\begin{equation}
  \BB A = \bigoplus_{k\ge 0} \BB_{k}A
  \qquad\text{with}\qquad
  \BB_{k}A = (\desusp\bar A)^{\otimes k},
\end{equation}
see for instance~\cite[\S 1.6]{Munkholm:1974} or~\cite[\S 0]{Baues:1981}.
The diagonal is given by the Alexander--Whitney formula
\begin{equation}
  \Delta [a_{1}|\cdots|a_{k}] = \sum_{i=0}^{k} [a_{1}|\cdots|a_{i}]\otimes[a_{i+1}|\cdots|a_{k}].
\end{equation}
In addition to the ordinary tensor product differential, the differential
on~\(\BB A\) has a component determined by the multiplication in~\(A\). More precisely,
\begin{align}
  \label{eq:bar-differential}
  d\,[a_{1}|\cdots|a_{k}] &= -\sum_{i=1}^{k}(-1)^{\epsilon_{i-1}}\,[a_{1}|\cdots|a_{i-1}|da_{i}|a_{i+1}|\cdots|a_{k}] \\
  \notag &\qquad\qquad +\sum_{i=1}^{\smash{k-1}} (-1)^{\epsilon_{i}}\,[a_{1}|\cdots|a_{i}a_{i+1}|\cdots|a_{k}]
\end{align}
where
\begin{equation}
  \epsilon_{i} = \deg{a_{1}} + \dots + \deg{a_{i}} - i.
\end{equation}
If \(A\) is \(1\)-connected, then \(\BB A\) is connected.

Let \(C\) be a connected coaugmented dgc and
\(f\colon C\to \BB A\) a morphism of coaugmented dgcs. Then the composition
\begin{equation}
  C \stackrel{f}{\longrightarrow} \BB A \longrightarrow \BB_{1}A = \desusp\bar A
  \stackrel{\susp}{\longrightarrow} \bar A \hookrightarrow A
\end{equation}
is a twisting cochain, and this assignment sets up a bijection between
morphisms of coaugmented dgcs~\(C\to \BB A\) and twisting cochains~\(C\to A\),
\cf~\cite[Prop.~1.9]{Munkholm:1974}.

\subsection{Simplicial sets}
\label{sec:simplicial}

Our reference for simplicial sets is \cite{May:1968}.

Let \(X\) be a simplicial set.
We call \(X\) \newterm{reduced} if \(X_{0}\) is a singleton
and \newterm{\(1\)-reduced} if \(X_{1}\) is a singleton.
The topological realization of~\(X\) is denoted by~\(|X|\).

We write \(C(X)\) and \(C^{*}(X)\) for the normalized
chain and cochain complex of~\(X\) with coefficients in~\(\kk\), \cf~\cite[Sec.~VIII.6]{MacLane:1967}.
Then \(C(X)\) is a dgc with the Alexander--Whitney map as diagonal
and augmentation induced by the unique map~\(X\to *\), and \(C^{*}(X)\) is a dga with product~\(\transpp{\Delta_{C(X)}}\).

For simplicial sets~\(X\) and~\(Y\), the shuffle map
\begin{equation}
  \shuffle=\shuffle^{X,Y}\colon C(X)\otimes C(Y)\to C(X\times Y)
\end{equation}
is a map of dgcs and furthermore satisfies \(\tau_{X,Y}\shuffle^{X,Y} = \shuffle^{Y,X} T_{C(X),C(Y)}\)
where \(\tau_{X,Y}\colon X\times Y\to Y\times X\) swaps the factors,
\cf~\cite[Sec.~3.2]{Franz:2003}.
We recall the shuffle map's definition for later reference.

Let \(x\) be a \(p\)-simplex in~\(X\) and \(y\) a \(q\)-simplex in~\(Y\).
We write a \((p,q)\)-shuffle as a pair of two surjective 
maps~\(\lambda\colon \setzero{p+q}\to\setzero{p}\) and
\(\mu\colon \setzero{p+q}\to\setzero{q}\)
such that \(\lambda(i-1)\le\lambda(i)\le\lambda(i-1)+1\) for~\(1\le i\le p+q\),
and analogously for~\(\mu\).
(We call such functions \newterm{slowly increasing}.)
Moreover, at each position~\(i\) either \(\lambda\) or \(\mu\) increases, but not both.
Then
\begin{equation}
  \label{eq:definition-shuffle}
  \shuffle^{X,Y}(x\otimes y) = \sum \sign(\lambda,\mu)\,\bigl(x(\lambda),y(\mu)\bigr),
\end{equation}
where the sum is over all \((p,q)\)-shuffles~\((\lambda,\mu)\).
Here \(x(\lambda)\) denotes the composition of~\(x\),
considered as a simplicial map from the standard \(p\)-simplex to~\(X\),
along~\(\lambda\), considered as a map from to the standard \((p+q)\)-simplex
to the standard \(p\)-simplex, and analogously for~\(y(\mu)\).

Let \(G\) be a simplicial group with multiplication~\(\mu\). Then \(C(G)\) is a dg~bialgebra
with unit given by the identity element of~\(G\) and multiplication
\begin{equation}
  C(G)\otimes C(G) \stackrel{\shuffle^{G,G}}{\longrightarrow} C(G\times G) \stackrel{\mu_{*}}{\longrightarrow} C(G).
\end{equation}
If \(G\) is commutative, then so is \(C(G)\).
Similarly, if \(G\) acts on the simplicial set~\(X\), then \(C(X)\) is a \(C(G)\)-dgc.
If the \(G\)-action is trivial, then the \(C(G)\)-action factors through the augmentation~\(\epsilon\colon C(G)\to\kk\).
(Remember that we use normalized chains.)
For~\(p\ge0\), we write \(1_{p}\in G_{p}\) for the identity element of the group of \(p\)-simplices.

\subsection{Universal bundles}
\label{sec:universal-bundles}

Given a simplicial group~\(G\),
we write \(\pi\colon EG\to BG\) for the universal \(G\)-bundle;
in~\cite[\S 21]{May:1968} the notation~\(BG=\xbar{W}(G)\) and~\(EG=W(G)\) is used.
We refer to~\citehomog{Sec~\refhomog{sec:universal-bundles}} for our conventions,
which can be obtained from~\cite{May:1968} by replacing \(G\) with the opposite group~\(G^{\mathrm{op}}\).
Important for us is the canonical map
\begin{equation}
  S\colon EG\to EG
\end{equation}
of degree~\(1\) such that for all~\(e\in EG_{p}\) one has
\begin{alignat}{2}
  \label{eq:d-S-0}
  \partial_{0} S e &= e, \\
  \partial_{1} S e &= e_{0} &\qquad& \text{if~\(p=0\),} \\
  \label{eq:d-S-k}
  \partial_{k} S e &= S \partial_{k-1} e &\qquad& \text{if \(p>0\) and \(k>0\).}
\end{alignat}
As a consequence, \(S\) induces a chain homotopy on~\(C(EG)\), again called \(S\),
from the projection to~\(e_{0}\) to the identity on~\(EG\),
\begin{equation}
  \label{eq:homotopy-S}
  (d S + S d)(e) =
  \begin{cases}
    e-e_{0} & \text{if \(p=0\),} \\
    e &  \text{if \(p>0\),}
  \end{cases}
\end{equation}
for any simplex~\(e\in EG\).
Furthermore, it satisfies
\begin{equation}
  \label{eq:properties-S}
  S S=0 \quad\text{and}\quad S e_0=0.
\end{equation}

\subsection{Topological spaces}

Let \(X\) be a topological space. Then \(\Simp(X)\), the graded set of singular simplices in~\(X\),
is a simplicial set. Topological groups are sent to simplicial groups.

Assume that \(X\) is (non-empty, connected and) simply connected with basepoint~\(*\in X\).
We write \(\SimpOne{X}\subset\Simp(X)\) for the simplicial subset of singular simplices
whose \(1\)-skeleton is mapped to~\(*\). The inclusion~\(\SimpOne{X}\hookrightarrow\Simp(X)\)
is a homotopy equivalence, compare~\cite[Thm.~12.5]{May:1968}.
In particular, the homotopy type of~\(\SimpOne{X}\) does not depend on the chosen basepoint.

\section{Interval cut operations}

We follow the treatment in~\cite[Secs.~1.2 \&~2.2]{BergerFresse:2004}.

Let \(k\),~\(l\ge0\). A surjection~\(u\colon\setone{k+l}\to\setone{l}\)
is called \newterm{non-degenerate} if \(u(i)\ne u(i+1)\) for all~\(1\le i<k+l\).
The surjections form an operad in the category of complexes.
The symmetric group~\(S_{l}\)
acts on the values of~\(u\) in the canonical way.
For the operadic composition rules and the differential
see~\cite[Secs.~1.2]{BergerFresse:2004}.
The \newterm{degree} of~\(u\) is~\(\deg{u}=k\).

Let \(X\) be a simplicial set.
The interval cut operation~\(\AWu{u}\colon C(X)\to C(X)^{\otimes l}\)
associated to a non-de\-gen\-er\-ate surjection~\(u\) is defined by
\begin{equation}
  \label{eq:def-interval-cut}
  \AWu{u}(x) = \AWu{u}^{X}(x) =
    \sum\pm x(\nu_{1})\otimes\cdots\otimes x(\nu_{l}),
\end{equation}
for~\(x\in X_{p}\), where the sum is over all cuts of the interval~\(\setzero{p}\),
say specified by indices
\begin{equation}
  0=p_0\le p_1\le\cdots\le p_{k+l}=p.
\end{equation}
The sequence~\(\nu_{s}\) is built from the concatenation of the intervals~\([p_{i-1},p_{i}]\)
such that \(u(i)=s\).
The sign is the product of a permutation sign and a position sign,
both of which are recalled below.

An interval~\([p_{i-1},p_{i}]\) is called \newterm{final}
if \(u\) does not assume the value~\(u(i)\) on arguments~\(j>i\).
Otherwise the interval is called \newterm{inner}.
By abuse of language, we sometimes refer to this by saying that \(u(i)\) is final or inner.
The \newterm{length} of the interval~\(I=[p_{i-1},p_{i}]\) is \(\length_{u}I=p_{i}-p_{i-1}\)
if the interval is final and \(p_{i}-p_{i-1}+1\) if it is inner.

The permutation sign is defined as follows:
Consider the shuffle that takes
the sequence~\((u(1),\dots,u(k+l))\)
to the ordered sequence~\((1,\dots,1,2,\dots,2,\dots,p,\dots,p)\).
The permutation of the associated intervals produces a sign according
to the Koszul sign rule with the lengths of the intervals
as degrees.
The position sign is the product of the signs~\((-1)^{p_{i}}\),
one for each inner interval~\([p_{i-1},p_{i}]\).

Note that \(x(\nu_{s})\) is degenerate if
two intervals corresponding to the same value of~\(u\),
say at positions~\(i<j\), overlap
in the sense that \(p_{i}=p_{j-1}\). In particular,
\(\AWu{u}\) vanishes on \(0\)-chains if \(\deg u>0\).

The assignment~\(u\mapsto\AWu{u}\) is a morphism from the surjection operad
to the endomorphism operad of~\(C(X)\). In other words, it is compatible
with the differentials, the actions of the symmetric groups
and the operadic composition law.

\begin{example}
  \label{ex:steenrod}
  We reproduce the examples from~\cite[\S 2.2.8]{BergerFresse:2004},
  giving explicit formulas for \(\cupone\)-products and~\(\cuptwo\)-products.

  Let \(x\in X\) be a \(p\)-simplex.
  The interval cut operation
  \begin{equation}
    \AWu{(1,2)}(x) = \sum_{0\le p_{1}\le p} x(0,\dots,p_{1})\otimes x(p_{1},\dots,p)
  \end{equation}
  is the Alexander--Whitney diagonal~\(\Delta\) of the dgc~\(C(X)\),
  hence its transpose defines the cup product in~\(C^{*}(X)\).
  We also have
  \begin{align}
    \AWu{(1,2,1)}(x) &= \, \sum_{\mathclap{p_{1}<p_{2}}} \,
      (-1)^{\epsilon_{1}} \, x(0,\dots,p_{1},p_{2},\dots,p)\otimes x(p_{1},\dots,p_{2}), \\
    \AWu{(1,2,1,2)}(x) &= \, \sum_{\mathclap{p_{1}<p_{2}<p_{3}}} \,
      (-1)^{\epsilon_{2}} \, x(0,\dots,p_{1},p_{2},\dots,p_{3})\otimes x(p_{1},\dots,p_{2},p_{3},\dots,p).
    \end{align}
    where the sign exponents are
    \begin{align}
      \epsilon_{1} &= (p-p_{2})(p_{2}-p_{1})+p_{1}, \\
      \epsilon_{2} &= (p_{3}-p_{2})(p_{2}-p_{1}+1)+p_{1}+p_{2}.
    \end{align}

  From the definition of the differential in the surjection operad \cite[\S 1.2.3]{BergerFresse:2004}
  we infer
  the identities
  \begin{align}
    d\bigl(\AWu{(1,2,1)}\bigr) &= \AWu{(2,1)}-\AWu{(1,2)} = T_{C(X),C(X)}\Delta-\Delta, \\
    d\bigl(\AWu{(1,2,1,2)}\bigr) &= \AWu{(2,1,2)}+\AWu{(1,2,1)} = T_{C(X),C(X)}\AWu{(1,2,1)}+\AWu{(1,2,1)}.
  \end{align}
  Thus, writing \(\beta\cupone\gamma=-\transpp{\AWu{(1,2,1)}}(\beta\otimes\gamma)\)
  and \(\beta\cuptwo\gamma=-\transpp{\AWu{(1,2,1,2)}}(\beta\otimes\gamma)\),
  we get
  \begin{align}
    \label{eq:cup-1}
    d(\beta\cupone\gamma)+d\beta\cupone\gamma+(-1)^{\deg{\beta}}\,\beta\cupone d\gamma
    &= \beta\cup\gamma - (-1)^{\deg{\beta}\deg{\gamma}}\,\gamma\cup\beta, \\
    \label{eq:cup-2}
    d(\beta\cuptwo\gamma)-d\beta\cuptwo\gamma-(-1)^{\deg{\beta}}\,\beta\cuptwo d\gamma
    &= \beta\cupone\gamma +(-1)^{\deg{\beta}\deg{\gamma}}\,\gamma\cupone\beta,
  \end{align}
  showing that \(\cupone=-\transpp{\AWu{(1,2,1)}}\) is a \(\cupone\)-product
  and \(\cuptwo=-\transpp{\AWu{(1,2,1,2)}}\) a \(\cuptwo\)-product in the usual sense.
\end{example}

\begin{remark}
  Up to sign, the \(\cupone\)-product and \(\cuptwo\)-product introduced above are exactly those
  used by Steenrod~\cite{Steenrod:1947}
  to construct Steenrod squares. To describe this relationship more precisely,
  we write the products and the differential on~\(C^{*}(X)\) used by Steenrod with a tilde.
  Taking the formulas~\eqref{eq:hom-tensor},~\eqref{eq:def-transpose} and~\eqref{eq:def-d-dual}
  as well as~\cite[Sec.~2]{Steenrod:1947} into account,
  one can see that Steenrod's definition is related to ours via
  \begin{equation}
    \tilded\,\beta = (-1)^{\deg{\beta}}\,d\,\beta
    \qquad\text{and}\qquad
    \beta\tildecupi\gamma = (-1)^{\deg{\beta}\deg{\gamma}+i}\,\beta\cupi\gamma.
  \end{equation}
  
  From this one can deduce that the map
  \begin{equation}
    \Theta\colon \bigl(C^{*}(X),d,\cup\bigr) \to \bigl(C^{*}(X),\tilded,\tildecup\bigr),
    \qquad
    \beta \mapsto (-1)^{\kappa(\deg{\beta})}\,\beta
  \end{equation}
  is a morphisms of dgas, where \(\kappa(p)\in\Z_{2}\) is defined by
  \begin{equation}
    \kappa(p) \equiv \frac{p(p+1)}{2}
    \equiv \begin{cases}
      0 & \text{if \(p\equiv 0,3 \pmod{4}\),} \\
      1 & \text{if \(p\equiv 1,2 \pmod{4}\).}
    \end{cases}
  \end{equation}
  Moreover, the identities
  \begin{align}
    \Theta(\beta\cupone\gamma) &= (-1)^{\deg{\beta}+\deg{\gamma}+1}\,\Theta(\beta)\tildecupone\Theta(\gamma), \\
    \Theta(\beta\cuptwo\gamma) &= - \Theta(\beta)\tildecuptwo\Theta(\gamma)
  \end{align}
  transform \eqref{eq:cup-1} and~\eqref{eq:cup-2} into the corresponding instances of~\cite[Thm.~5.1]{Steenrod:1947}.
  
  The \(\cupone\)-product additionally satisfies the Hirsch formula
  \begin{equation}
    \label{eq:hirsch-formula}
    (\alpha\cup\beta)\cupone \gamma = (-1)^{\deg{\alpha}}\,\alpha\cup(\beta\cupone \gamma) + (-1)^{\deg{\beta}\deg{\gamma}}\,(\alpha\cupone \gamma)\cup\beta,
  \end{equation}
  compare~\citehomog{eq.~\eqrefhomog{eq:hirsch-formula}}.
  Hirsch~\cite{Hirsch:1955} uses the same sign conventions as Steenrod.
  The isomorphism~\(\Theta\) maps \eqref{eq:hirsch-formula} to Hirsch's original identity~\cite[Thm.~1]{Hirsch:1955}.
\end{remark}

\smallskip

We mentioned that the shuffle map is a map of dgcs.
A similar, but weaker statement for a larger class
of interval cut operations than just \(\Delta=\AW_{(1,2)}\)
will be crucial for us. To state it, we need to introduce some more terminology.

We call a non-degenerate surjection~\(u\) \newterm{biased} if at most one value
is assumed more than once, and \newterm{strongly biased} if exactly one value
is repeated.
If this distinguished value equals \(1\), we call the surjection
\newterm{\(1\)-biased} or \newterm{strongly \(1\)-biased}.
For example, the surjection~\((1,2)\) 
is \(1\)-biased, and \((2,1,2)\) is strongly biased, but not strongly \(1\)-biased.
Note that for a biased surjection~\(u\), an interval~\([p_{i-1},p_{i}]\)
can only be inner if~\(u(i)\) is the distinguished value.

For a \(1\)-biased surjection~\(u\colon\setone{k+l+1}\to\setone{l+1}\) and any simplicial sets~\(X\)~and~\(Y\),
we define the map
\begin{equation}
  \label{eq:def-AW-tilde-u}
  \AWtildeu{u}^{X,Y} = \bigl((\pi_{X})_{*}\otimes(\pi_{Y})_{*}^{\otimes l}\bigr)\AWu{u}\colon C(X\times Y)\to C(X)\otimes C(Y)^{\otimes l},
\end{equation}
where \(\pi_{X}\)~and~\(\pi_{Y}\) are the canonical projections.

\begin{theorem}
  \label{thm:AW-shuffle}
  The following diagram commutes for all simplicial sets~\(X\), \(Y\),~\(Z\)
  and all \(1\)-biased surjections~\(u\colon\setone{k+l+1}\to\setone{l+1}\).
  \begin{equation}
  \begin{tikzcd}[column sep=huge,row sep=large]
      C(X)\otimes C(Y\times Z) \arrow{d}[left]{1\otimes\AWtildeu{u}^{Y,Z}} \arrow{r}{\shuffle^{X,Y\times Z}} & C(X\times Y\times Z) \arrow{d}{\AWtildeu{u}^{X\times Y,Z}} \\
      C(X)\otimes C(Y)\otimes C(Z)^{\otimes l} \arrow{r}[below]{\shuffle^{X,Y}\otimes 1^{\otimes l}} & C(X\times Y)\otimes C(Z)^{\otimes l} 
  \end{tikzcd}
  \end{equation}
\end{theorem}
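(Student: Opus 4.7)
The plan is to verify the identity by direct expansion on a generator $x \otimes (y,z) \in C(X) \otimes C(Y \times Z)$ with $x \in X_p$ and $(y,z) \in (Y \times Z)_q$, and then to set up a sign-preserving bijection between the combinatorial data indexing the two sides.

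On the right-hand side, applying $\shuffle^{X, Y \times Z}$ produces a sum over $(p, q)$-shuffles $(\lambda, \mu)$ of simplices $(x(\lambda), y(\mu), z(\mu))$, and then $\AWu{u}$ introduces a further sum over interval cuts $0 = s_0 \le \cdots \le s_{k+l+1} = p+q$. After projection, the first tensor factor is a concatenation over the intervals $[s_{i-1}, s_i]$ with $u(i) = 1$, while for each position $j$ with $u(j) \ne 1$ the corresponding factor is $z(\mu|_{[s_{j-1}, s_j]})$. The crucial observation is that in normalized chains this latter factor vanishes unless $\mu|_{[s_{j-1}, s_j]}$ is strictly increasing, which---because $(\lambda, \mu)$ is a shuffle---forces $\lambda$ to be constant on every non-$1$ interval.

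On the left-hand side, $\AWtildeu{u}^{Y, Z}$ sums over cuts $0 = t_0 \le \cdots \le t_{k+l+1} = q$, outputs the concatenated simplex $y(\nu_1)$ in the first slot, and then $\shuffle^{X, Y} \otimes 1^{\otimes l}$ inserts a $(p, |\nu_1|)$-shuffle $(\lambda', \mu')$ on the first two factors. I would match the surviving RHS data to the LHS data by setting $t_j = \mu(s_j)$ and decomposing the $(p, |\nu_1|)$-shuffle at the ``seams'' where $\mu'$ transitions between the $1$-labeled pieces of $\nu_1$; these sub-shuffles coincide with the restrictions of $(\lambda, \mu)$ to the $1$-labeled intervals of $[0, p+q]$. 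That this is a bijection reflects the fact that the constraint ``$\lambda$ is constant on non-$1$ intervals'' forces all the $X$-moves to happen inside $1$-intervals, where they can be freely combined into a single shuffle with the concatenated $\nu_1$.

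The main obstacle is the sign verification, with four contributions to balance: the Koszul sign produced when applying $1 \otimes \AWtildeu{u}^{Y, Z}$, involving the degree $k$ of $\AWtildeu{u}^{Y, Z}$; the position signs $(-1)^{s_i}$ versus $(-1)^{t_i}$ at inner intervals, which differ by the cumulative $X$-contribution to the cut; the permutation signs that order the $u$-labels, where the inner $1$-intervals carry additional length $p_j$ on the RHS; and the shuffle signs, where the cross-inversions of the $(p, q)$-shuffle on the RHS include interactions of $X$-moves in later $1$-blocks with $Y$-moves of earlier non-$1$ blocks that do not appear in the $(p, |\nu_1|)$-shuffle on the LHS. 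I would verify that these four contributions cancel modulo $2$ by parameterizing each $1$-interval by its $X$-contribution $p_j$ and $Y$-contribution $q_j$ and then carefully collecting the terms; the small test case $u = (1,2,1)$ (where the relevant sums collapse to an identity of the form $p_1 + p_2(t_2 - t_1) + p_2(t_2 - t_1) - p_2 + kp \equiv 0 \pmod 2$) is a useful sanity check before tackling the general formula.
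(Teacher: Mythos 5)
Your expansion of both sides on a generator \(x\otimes(y,z)\), the observation that non-degeneracy of the \(z\)-factors forces \(\mu\) to be strictly increasing on the non-\(1\)-intervals and hence \(\lambda\) to be constant there, and the matching of index data (cut points of \(\setzero{q}\) as \(\mu\)-images of the cut points of \(\setzero{p+q}\), with the sub-shuffles inside the \(1\)-intervals assembling into a single \((p,\tilde q)\)-shuffle) is exactly the bijection used in the paper's proof. That part of your plan is sound, though to get a bijection rather than just a surjection you should spell out how \((\lambda,\mu)\) and the cut of \(\setzero{p+q}\) are reconstructed from the left-hand data: one re-inserts, at the unique seam index of each non-\(1\)-interval, a block of indices on which \(\lambda'\) is held constant and \(\mu'\) increases, which is how the paper proves injectivity and surjectivity simultaneously.

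The genuine gap is the sign verification, which is the actual content of the theorem. You correctly isolate the four contributions — the Koszul sign \((-1)^{kp}\) from moving \(\AWtildeu{u}^{Y,Z}\) (of degree \(k=\deg{u}\)) past \(x\), the position signs, the permutation signs, and the shuffle signs — but you only state that you ``would verify'' their cancellation by parameterizing the \(1\)-intervals and collecting terms, with a single test case \(u=(1,2,1)\). A closed-form global computation is substantially more delicate than that test suggests: the shuffle-sign discrepancy is not a sum of per-block quantities but counts inversions between the \(\lambda\)-increases inside each \(1\)-interval and all indices of the non-\(1\)-intervals to its left (with interval endpoints shared between neighbours, so a careful boundary count is needed), while the permutation sign involves the lengths \(\length_{u}I'\) of those intervals and the inner \(1\)-intervals carry an extra \(+1\). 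The paper sidesteps the global bookkeeping by induction on \(p\): delete one index at which \(\lambda\) increases, lying in a \(1\)-interval \(I\) with \(m\) \(1\)-intervals to its left; then the position signs of the inner \(1\)-intervals from \(I\) onward contribute \((-1)^{\deg{u}-m}\), the permutation sign changes by \(\prod_{i}(-1)^{\length_{u}I'_{i}}\) over the non-\(1\)-intervals \(I'_{i}\) left of \(I\), and the shuffle-sign ratio is \((-1)^{m}\prod_{i}(-1)^{\length_{u}I'_{i}}\), so the total ratio changes by exactly \((-1)^{\deg{u}}\) per deleted index, yielding \((-1)^{\deg{u}p}\). Until you carry out a computation of this kind — inductive or genuinely global — the proof is incomplete at precisely its hardest point.
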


\begin{proof}
  Pick (non-degenerate) simplices \(x\in X_{p}\) and
  \((y,z)\in(Y\times Z)_{q}\) and consider a simplex
  \begin{equation}
    \label{eq:term11}
    (x(\lambda),(y,z)(\mu)) = (x(\lambda),y(\mu),z(\mu)) \in (X\times Y\times Z)_{p+q}
  \end{equation}
  appearing in the sum~\eqref{eq:definition-shuffle}
  for~\(\shuffle^{X,Y\times Z}(x\otimes (y,z))\)
  where \(\lambda\colon\setzero{p+q}\to\setzero{p}\) and \(\mu\colon\setzero{p+q}\to\setzero{q}\).
  If we apply \(\AWtildeu{u}^{X\times Y,Z}\)
  to~\eqref{eq:term11}, then we get terms of the form
  \begin{equation}
    \label{eq:terms1}
    \bigl(x(\lambda\circ\nu_{1}),y(\mu\circ\nu_{1}) \bigr)
      \otimes z(\mu\circ\nu_{2})\otimes\cdots\otimes z(\mu\circ\nu_{l}).
  \end{equation}
  In order to obtain a non-degenerate simplex, we need that
  \(\mu\circ\nu_{2}\),~\ldots,~\(\mu\circ\nu_{l}\) are all strictly increasing.
  In other words, repeated indices can only occur in the
  subsequence~\(\mu\circ\nu_{1}\) of~\(\mu\), and \(\lambda\) therefore is constant
  on the images of~\(\nu_{2}\),~\dots,~\(\nu_{l}\).
  In what follows, we write \(\nu\) for the collection of
  sequences~\((\nu_{1},\dots,\nu_{l})\).

  Since \(u\) is \(1\)-biased, the sequence~\(\nu_{s}\) for~\(s\ge2\) enumerates the values
  in a single (final) interval~\([p_{i-1},p_{i}]\) of length~\(r_{s}=p_{i}-p_{i-1}\).
  In particular, \(\nu\) determines the interval cut.
  Write
  \begin{equation}
    \tilde q = q - \sum_{s\ge2}(r_{s}-1),
  \end{equation}
  so that \(\nu_{1}\colon \setzero{p+\tilde q}\to\setzero{p+q}\),
  and define
  \begin{align}
    \lambda'=\lambda\circ\nu_{1}&\colon \setzero{p+\tilde q}\to\setzero{p}, \\
    \nu'_{s}=\mu\circ\nu_{s}&\colon \setzero{r_{s}}\to\setzero{q}
    \qquad\text{for~\(s\ge2\)}.
  \end{align}
  Then \(\lambda'\) is slowly increasing and \(\nu_{s}\) strictly increasing for~\(s\ge2\).
  Moreover, we can write
  \begin{equation}
    \mu\circ\nu_{1}=\nu'_{1}\circ\mu'\colon \setzero{p+\tilde q}\to\setzero{q}
  \end{equation}
  for a unique strictly increasing sequence~\(\nu'_{1}\colon\setzero{\tilde q}\to\setzero{q}\)
  (defined on \(p\)~values fewer than~\(\nu_{1}\))
  and a unique slowly increasing sequence~\(\mu'\colon\setzero{p+\tilde q}\to\setzero{\tilde q}\).
  Then \((\lambda',\mu')\) is a \((p,\tilde q)\)-shuffle,
  and \eqref{eq:terms1} becomes
  \begin{equation}
    \label{eq:terms2}
    \bigl(x(\lambda'),y(\nu'_{1}\circ\mu') \bigr)
      \otimes z(\nu'_{2})\otimes\cdots\otimes z(\nu'_{l}),
  \end{equation}
  which, up to sign, is a term appearing in
  \begin{multline}
    \label{eq:functions2}
    (\shuffle^{X,Y}\otimes1^{\otimes l})(1\otimes\AWtildeu{u}^{Y,Z})\bigl(x\otimes (y,z)\bigr) \\
    = (-1)^{\deg{u}p}(\shuffle^{X,Y}\otimes1^{\otimes l})\bigl(x\otimes\AWtildeu{u}^{Y,Z}(y,z)\bigr).
  \end{multline}

  This procedure sets up a bijection between the terms
  of the form~\eqref{eq:terms1} and those of the form~\eqref{eq:terms2}
  because \((\lambda,\mu,\nu)\) can be reconstructed
  from~\((\lambda',\mu',\nu')\):
  We look at the indices~\(j\) on which \(\lambda'\) and~\(\mu'\) are defined,
  starting at~\(j=0\).
  When we reach the unique index~\(j\) such that \(\lambda'(j)=\mu'(j)=\nu_{2}(0)\),
  we insert \(r_{2}-1\)~values in the interval of definition of~\(\lambda'\) and~\(\mu'\)
  and modify these functions such that \(\lambda'\) is constant on the new indices and
  \(\mu'\) increases by~\(1\) with each new index. The values that \(\mu'\) previously
  assumed on later arguments are all increased by~\(r_{2}-1\), as is the range of~\(\mu'\).
  We repeat this procedure for all remaining values~\(s=3\),~\dots,~\(l\).
  At the end we obtain the sequences~\((\lambda,\mu,\nu)\)
  with which we have started.

  We now verify that for each term~\eqref{eq:terms1}\slash\eqref{eq:terms2}
  we get the same sign, which in each case is the product of
  a shuffle sign as well as a permutation and a position sign
  associated to the interval cut.
  We proceed by induction on~\(p\). For~\(p=0\) there is nothing to prove.

  To facilitate our discussion, we refer to the intervals in the interval cut
  we are considering
  as ``\(1\)-intervals'' and ``non-\(1\)-intervals'', depending on the corresponding
  value of~\(u\). All non-\(1\)-intervals are final. 

  For the given~\(p\),~\(q\) and~\((\lambda,\mu,\nu)\), we consider an index
  where \(\lambda\) actually increases, necessarily lying in some \(1\)-interval~\(I\)
  of the cut of the interval~\(\setzero{p+q}\).
  If we drop this index from~\(\lambda\) and also from~\(\nu_{1}\), we get a
  \((p-1,q)\)-shuffle~\((\hat\lambda,\hat\mu)\) and sequences~\(\hat\nu_{1}\),~\dots,~\(\nu_{l}\)
  for the corresponding interval cut of~\(\setzero{p+q-1}\). These sequences in turn
  define sequences~\((\hat\lambda',\hat\mu',\hat\nu')\).
  By induction, \((\hat\lambda,\hat\mu,\hat\nu)\) and \((\hat\lambda',\hat\mu',\hat\nu')\)
  lead to the same signs for any~\(\hat x\in X_{p-1}\) and~\(y\in Y_{q}\), \(z\in Z_{q}\).
  More precisely,
  \begin{equation}
    \frac
      {\sign(\hat\lambda,\hat\mu)\perm(\hat\nu)\pos(\hat\nu)}
      {\sign(\hat\lambda',\hat\mu')\perm(\hat\nu')\pos(\hat\nu')}
    = (-1)^{\deg{u}(p-1)}
  \end{equation}
  where \(\perm(-)\) and \(\pos(-)\) denote the permutation and position sign, respectively,
  of the corresponding interval cut.
  The sign on the right-hand side is the same as in~\eqref{eq:functions2}.
  We want to prove the analogous formula for~\((\lambda,\mu,\nu)\),
  with \(p-1\) replaced by~\(p\) in the exponent.

  The number of \(1\)-intervals is \(\deg{u}+1\); one of them is the interval~\(I\).
  Let \(m\) be the number of \(1\)-intervals strictly to the left of~\(I\),
  so that the number of \(1\)-intervals strictly to the right of~\(I\)
  equals \(\deg{u}-m\).
  Also, let \(I'_{1}\),~\dots,~\(I'_{n}\) be the non-\(1\)-intervals to the left of~\(I\).
  
  We have \(\hat\nu'=\nu'\), so that
  \begin{equation}
    \perm(\nu')\pos(\nu') = \perm(\hat\nu')\pos(\hat\nu').
  \end{equation}
  Let us compare the permutation and position signs for~\(\nu\) and~\(\hat\nu\).
  All positional signs of inner intervals
  from~\(I\) (including) to the end change by~\(-1\), that is,
  \begin{equation}
    \pos(\nu) = (-1)^{\deg{u}-m}\pos(\hat\nu).
  \end{equation}
  
  The permutation signs change for all pairs of intervals~\((I',I)\)
  where \(I'\) is a non-\(1\)-interval to the left of~\(I\).
  Each pair~\((I',I)\) produces a sign change by \(-1\) to the length of~\(I'\),
  hence
  \begin{equation}
    \perm(\nu) = \perm(\hat\nu)\prod_{i=1}^{n}(-1)^{\length_{u}I'_{i}}.
  \end{equation}
  
  The shuffle sign change,
  \begin{equation}
    \sigma =
    \frac{\sign(\lambda,\mu)}{\sign(\lambda',\mu')}
    :
    \frac{\sign(\hat\lambda,\hat\mu)}{\sign(\hat\lambda',\hat\mu')}
    \: ,
  \end{equation}
  is \(-1\) to the number of values in the non-\(1\)-intervals~\(I'_{1}\),~\dots,~\(I'_{n}\)
  that are cut out 
  when passing from~\((\lambda,\mu)\) to~\((\lambda',\mu')\).
  This is because \(\lambda\) is constant on these values,
  hence the permutation~\(\pi\) which orders the shuffle~\((\lambda,\mu)\)
  moves these indices past all indices where \(\lambda\) increases,
  and \(\sign(\lambda,\mu)=\sign\pi\).
  Given that the endpoints of each interval are shared with the neighbouring intervals,
  we have to count the number of values in the interior of each~\(I'_{i}\)
  plus the number of boundary values that are not the boundary of a \(1\)-interval.

  The number of interior values of the final interval~\(I'\) is \(\length_{u}I'_{i}-1\).

  There are \(m+n\)~intervals to the left of~\(I\), hence \(m+n+1\)~boundary values,
  including the left boundary of~\(I\).
  The \(1\)-intervals produce \(2m+1\)~of them,
  so that there are \(n-m\)~boundary values outside of \(1\)-intervals.
  In summary,
  \begin{equation}
    \sigma = (-1)^{m-n} \prod_{i=1}^{n}(-1)^{\length_{u}I'_{i}-1}
    = (-1)^{m}\prod_{i=1}^{n}(-1)^{\length_{u}I'_{i}} \: .
  \end{equation}

  Collecting all the sign changes, we get
  \begin{equation}
  \begin{split}
    \mathrlap{\frac
      {\sign(\lambda,\mu)\perm(\nu)\pos(\nu)}
      {\sign(\lambda',\mu')\perm(\nu')\pos(\nu')}} \qquad &
      \\
    &=
    \sigma\cdot(-1)^{\deg{u}-m}\cdot\prod_{i=1}^{n}(-1)^{\length_{u}I'_{i}}\cdot
    \frac
      {\sign(\hat\lambda,\hat\mu)\perm(\hat\nu)\pos(\hat\nu)}
      {\sign(\hat\lambda',\hat\mu')\perm(\hat\nu')\pos(\hat\nu')} \\
    &= (-1)^{\deg{u}+\deg{u}(p-1)}
    = (-1)^{\deg{u}p},
  \end{split}
  \end{equation}
  as desired.
  This completes the proof.
\end{proof}

\begin{corollary}
  \label{thm:AWtildeu-equiv}
  Let \(X\) and~\(Y\) be simplicial sets.
  If the simplicial group~\(G\) acts on~\(X\) (but not on~\(Y\)),
  then \(\AWtildeu{u}^{X,Y}\) is \(C(G)\)-equivariant
  for any \(1\)-biased surjection~\(u\).
\end{corollary}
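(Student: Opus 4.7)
The plan is to reduce the equivariance claim to the commutative square of \Cref{thm:AW-shuffle}, applied with the three simplicial-set variables specialized to $G$, $X$, and $Y$, and then post-composed with the $G$-action. Write $\alpha\colon G\times X\to X$ for the action and $\alpha_{Y}=\alpha\times 1_{Y}\colon G\times X\times Y\to X\times Y$ for its extension; the $C(G)$-module structure on $C(X\times Y)$ is then $(\alpha_{Y})_{*}\circ\shuffle^{G,X\times Y}$, as recalled in \Cref{sec:simplicial}.

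The key computation is to apply \Cref{thm:AW-shuffle} with inputs $(G,X,Y)$ and post-compose both sides with $\alpha_{*}\otimes 1^{\otimes l}$. On the right-hand side, the factor $\alpha_{*}\circ\shuffle^{G,X}$ is exactly the $C(G)$-action on $C(X)$, so that side becomes the ``action on the $C(X)$-factor only'' precomposed with $1\otimes\AWtildeu{u}^{X,Y}$. On the left-hand side, using the definition~\eqref{eq:def-AW-tilde-u} together with naturality of $\AWu{u}$ in its simplicial-set argument, and noting the two elementary identities $\pi_{X}\circ\alpha_{Y}=\alpha\circ\pi_{G\times X}$ and $\pi_{Y}\circ\alpha_{Y}=\pi_{Y}$, one rewrites $(\alpha_{*}\otimes 1^{\otimes l})\circ\AWtildeu{u}^{G\times X,Y}$ as $\AWtildeu{u}^{X,Y}\circ(\alpha_{Y})_{*}$. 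The resulting identity reads $\AWtildeu{u}^{X,Y}\circ(\alpha_{Y})_{*}\circ\shuffle^{G,X\times Y}=(\alpha_{*}\shuffle^{G,X}\otimes 1^{\otimes l})\circ(1\otimes\AWtildeu{u}^{X,Y})$, which is precisely $C(G)$-equivariance of~$\AWtildeu{u}^{X,Y}$ with respect to this ``first-factor-only'' action.

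To close the argument one has to identify the first-factor-only action with the genuine diagonal $C(G)$-action on $C(X)\otimes C(Y)^{\otimes l}$. Since $G$ acts trivially on~$Y$, the action of~$C(G)$ on each~$C(Y)$ factors through the augmentation~$\epsilon\colon C(G)\to\kk$, so that the diagonal action $c\cdot(\xi\otimes\eta_{1}\otimes\cdots\otimes\eta_{l})=\sum (c_{(0)}\cdot\xi)\otimes\epsilon(c_{(1)})\eta_{1}\otimes\cdots\otimes\epsilon(c_{(l)})\eta_{l}$ collapses to $(c\cdot\xi)\otimes\eta_{1}\otimes\cdots\otimes\eta_{l}$ by iterating the counit axiom for the coalgebra~$C(G)$. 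I expect the main obstacle to be the naturality bookkeeping in the middle step, where the projections $\pi_{X}$ and $\pi_{Y}$ must be pushed past $\alpha_{Y}$ cleanly inside the definition of~$\AWtildeu{u}$; all of the sign-heavy combinatorics is already packaged inside \Cref{thm:AW-shuffle}, so once this naturality identification is in place, the equivariance follows without further work.
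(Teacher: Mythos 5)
Your proposal is correct and follows essentially the same route as the paper, which likewise deduces the equivariance from \Cref{thm:AW-shuffle} applied with \((G,X,Y)\), using that the \(C(G)\)-actions on \(C(X)\) and \(C(X\times Y)\) are defined via the shuffle map. You merely spell out the naturality of \(\AWu{u}\) under \((\alpha\times 1_{Y})_{*}\) and the collapse of the diagonal action through the augmentation, details the paper leaves implicit.
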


\begin{proof}
  The actions of~\(C(G)\) on \(C(X)\) and \(C(X\times Y)\)
  are defined via the shuffle map.
  Now use \Cref{thm:AW-shuffle}.
\end{proof}

For a non-degenerate surjection~\(u\colon\setone{k+l+1}\to\setone{l+1}\) we write \(u'\) for
the non-degenerate surjection which is obtained by removing
the first value and, if it does not appear again,
decreasing all larger values by~\(1\). For example,
\((1,2,1)'=(2,1)\) and \((2,1,3)'=(1,2)\).
Also, for~\(1<s\le l\), we define the permutation
\begin{equation}
  \label{eq:def-tau-s}
  \tau_{s} =
  \begin{pmatrix}
    1 & 2 & 3 & \cdots & s \\
    s & 1 & 2 & \cdots & s-1
  \end{pmatrix}
  \in S_{l} \; ;
  \qquad
\end{equation}
for~\(s=1\) this is the identity permutation.

The following observation will be useful in proofs by induction.

\begin{lemma}
  \label{thm:AW-AW12}
  Let \(u\colon\setone{k+l}\to\setone{l}\) be a non-degenerate surjection
  such that \(s=u(1)\) is final. Then
  \begin{equation*}
    \AWu{u} = \transposition{s}{1}(1\otimes\AWu{u'})\,\Delta.
  \end{equation*}
\end{lemma}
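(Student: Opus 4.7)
My plan is to exploit the fact, noted in the paper, that $u\mapsto\AWu{u}$ is a morphism of operads. I claim that the identity reduces to the operadic identity $u = \tau_s\cdot((1,2)\circ_2 u')$ of non-degenerate surjections. Granting this, operadic functoriality and $S_l$-equivariance give $\AWu{u} = \tau_s\cdot(\AWu{(1,2)}\circ_2\AWu{u'})$; since $\AWu{(1,2)}=\Delta$ by \Cref{ex:steenrod} and since the operadic composition at slot~$2$ in the endomorphism operad is $f\circ_2 g=(1\otimes g)\circ f$ (the Koszul sign of \eqref{eq:hom-tensor} being built into the definition of the tensor $1\otimes g$), this equals $\tau_s\,(1\otimes\AWu{u'})\,\Delta$, which is the right-hand side of the lemma.

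To verify the operadic identity $u=\tau_s\cdot((1,2)\circ_2 u')$, I would compute $(1,2)\circ_2 u'$ directly. Since value~$2$ occurs only once in $(1,2)$, the composition is a single surjection, obtained by substituting $u'$ into position~$2$ with its values shifted up by $i-1=1$; the result is the sequence $(1,\,u'(1)+1,\,u'(2)+1,\,\ldots,\,u'(k+l-1)+1)$. Now $\tau_s$ permutes values according to $1\mapsto s$ and $j\mapsto j-1$ for $2\le j\le s$, fixing values $>s$. Using the defining shift $u'(j)=u(j+1)$ when $u(j+1)<s$ and $u'(j)=u(j+1)-1$ when $u(j+1)>s$, a two-case analysis shows in both cases that $\tau_s(u'(j)+1)=u(j+1)$. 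Consequently $\tau_s\cdot((1,2)\circ_2 u')=(s,\,u(2),\,u(3),\,\ldots,\,u(k+l))=u$, as needed.

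The main obstacle is simply recalling the correct conventions for operadic composition in the surjection operad (values of $u_2$ get shifted up by $i-1$ when composing at slot~$i$, and only a single term arises because the substituted value occurs just once in $u_1$) and for the $S_l$-action on values; these are used only implicitly in the excerpt. A self-contained alternative would be to expand both sides on a $p$-simplex $x$ and compare term by term over cuts $0=p_0\le p_1\le\cdots\le p_{k+l}=p$, but this forces one to juggle four sign contributions: the Koszul sign of \eqref{eq:hom-tensor} for $1\otimes\AWu{u'}$, the permutation-sign difference $\perm(u)/\perm(u')=(-1)^{p_1\sum_{i<s}\deg{x(\nu_i)}}$, the analogous Koszul sign from $\tau_s$, and a factor $(-1)^{kp_1}$ from reindexing the position signs inside $\AWu{u'}$ (using that $u$ has exactly $k$ inner intervals, all at positions $\ge 2$ by finality of $s$) which cancels the Koszul factor from \eqref{eq:hom-tensor}. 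The operadic approach absorbs all of these into functoriality.
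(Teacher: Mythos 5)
Your proposal is correct and follows the paper's own route: the paper likewise reduces the statement to the identity \(\AWu{u}=\transposition{s}{1}\bigl(\AWu{(1,2)}\circ_{2}\AWu{u'}\bigr)\) and invokes the fact that \(u\mapsto\AWu{u}\) is a morphism of operads, compatible with the symmetric group actions and the composition laws for surjections. The only difference is that you spell out the surjection-level computation \(u=\tau_{s}\cdot\bigl((1,2)\circ_{2}u'\bigr)\) explicitly, which the paper leaves as an appeal to the composition and permutation rules (or a direct check); your case analysis of the value shifts is accurate.
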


\begin{proof}
  Given that the diagonal of~\(C(X)\) equals \(\AWu{(1,2)}\),
  our claim is equivalent to the identity
  \begin{equation}
    \AWu{u} = \transposition{s}{1} \bigl( \AWu{(1,2)}\circ_{2} \AWu{u'} \bigr).
  \end{equation}
  Since \(\AW\) is a morphism of operads,
  this follows from the laws for compositions
  and permutations of surjections.
  Alternatively, it can be checked directly.
\end{proof}

We also need to know how interval cut operations interact
with the canonical homotopy~\(S\) on the total space of a universal bundle
introduced in \Cref{sec:universal-bundles}.

\begin{lemma}
  \label{thm:AW-S}
  Let \(G\) be a simplicial group and
  let \(u\colon\setone{k+l+1}\to\setone{l+1}\) be non-de\-gen\-er\-ate with~\(u(1)=1\). Then on~\(C(EG)\)
  one has the identity
  \begin{equation*}
    \AWu{u}\,S = \begin{cases}
      (-1)^{\deg{u}}\,(S\otimes 1^{\otimes l})\,\AWu{u} + e_{0}\otimes\AWu{u'}\,S &
        \hbox{if \(u(1)\) is final,} \\
      (-1)^{\deg{u}}\,(S\otimes 1^{\otimes l})\,\AWu{u} + (S\otimes 1^{\otimes l})\,\AWu{u'}\,S &
        \hbox{if \(u(1)\) is inner.}
      \end{cases}
  \end{equation*}
  In particular,
  \begin{equation*}
    \Delta S = (S\otimes 1)\,\Delta + e_{0} \otimes S.
  \end{equation*}
\end{lemma}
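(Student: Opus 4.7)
The plan is to establish the identity by direct combinatorial analysis of \eqref{eq:def-interval-cut}. For $x\in EG_p$, the terms of $\AWu{u}(Sx)$ are indexed by cuts $0=q_0\le q_1\le\cdots\le q_{k+l+1}=p+1$ of the interval $[0,p+1]$, and I would group these cuts according to whether $q_1\ge 1$ (Case~A) or $q_1=0$ (Case~B). The key analytic ingredient is that, for any strictly increasing map $\nu\colon\setzero{r}\to\setzero{p+1}$, iterating the face relations \eqref{eq:d-S-0}--\eqref{eq:d-S-k} yields
\begin{equation*}
  Sx(\nu) = \begin{cases}
    x(\nu^{-}) & \text{if $\nu(0)\ge 1$, where $\nu^{-}(i)=\nu(i)-1$,} \\
    S\bigl(x(\nu^{-})\bigr) & \text{if $\nu(0)=0$, where $\nu^{-}(i)=\nu(i+1)-1$.}
  \end{cases}
\end{equation*}

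In Case~A, vertex $0$ appears only in $\nu_1$, so only the first tensor factor picks up an $S$. The change of variables $q'_0=0$ and $q'_i=q_i-1$ for $i\ge 1$ sets up a bijection with the cuts of $[0,p]$ defining $\AWu{u}(x)$. For signs, the interval $I_1=[0,q_1]$ carries the smallest value $u(1)=1$ and hence sits in its sorted position, so it never participates in an inversion; the permutation sign is therefore unchanged by the shift. The position sign, on the other hand, picks up a factor $(-1)$ for each of the $\deg{u}=k$ inner intervals, so Case~A contributes $(-1)^{\deg{u}}(S\otimes 1^{\otimes l})\AWu{u}(x)$.

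In Case~B, the interval $I_1=[0,0]$ contributes nothing to the position sign (as $q_1=0$) and, by the same argument as above, nothing to any inversion, so the sign of a cut $q$ for $u$ agrees with that of the corresponding cut $r_i=q_{i+1}$ for $u'$. If $u(1)$ is final, then $\nu_1=(0)$ and $Sx(\nu_1)=e_0$, and summation yields $e_0\otimes\AWu{u'}(Sx)$. If $u(1)$ is inner, then $\nu_1=(0)\cdot\tilde\nu_1$ where $\tilde\nu_1$ is the value-$1$ sequence of the cut $r$ for $u'$; terms with $\tilde\nu_1(0)=0$ vanish on both sides (the left by simplicial degeneracy, the right by $SS=0$), while for $\tilde\nu_1(0)\ge 1$ the displayed identity gives $Sx(\nu_1)=S\cdot Sx(\tilde\nu_1)$, and summation produces $(S\otimes 1^{\otimes l})\AWu{u'}(Sx)$. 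The ``In particular'' statement is the special case $u=(1,2)$, for which $u'=(1)$ and $\AWu{(1)}=1$.

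The hard part will be the systematic sign bookkeeping, especially confirming the exact prefactor $(-1)^{\deg{u}}$ in Case~A. The crucial simplification throughout is that $I_1$ occupies its sorted position already, which eliminates any interaction of $I_1$ with the permutation sign and reduces the sign comparison between $u$ and $u'$ to counting the position-sign flips over the $k$ inner intervals.
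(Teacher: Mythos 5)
Your proposal is correct and follows essentially the same route as the paper's proof: the same formula for \((Sx)(\nu)\) obtained from the face relations \eqref{eq:d-S-0}--\eqref{eq:d-S-k}, the same case split on whether the first cut point is positive or zero (with the final/inner subcases), and the same sign analysis, with the permutation sign untouched because the first interval carries the smallest value and the position sign shifting by \((-1)^{\deg u}\) over the \(k\) inner intervals. Your extra remark that the terms with \(\tilde\nu_1(0)=0\) vanish on one side by degeneracy and on the other by \(SS=0\) is a point the paper leaves implicit, and it is handled correctly.
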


\begin{proof}
  Let \(x\in EG_{p}\), and let \(\nu\colon\setzero{r}\to\setzero{p+1}\) be strictly increasing with~\(r>0\).
  By identifying \(\partial_{i}x\) with~\(x(\delta_{i})\) for suitable
  injections~\(\delta_{i}\colon\setzero{p-1}\to\setzero{p}\), \cf~\cite[p.~4]{May:1968},
  it follows from the relations~\eqref{eq:d-S-0}--\eqref{eq:d-S-k} that one has
  \begin{equation}
    (Sx)(\nu) = \begin{cases}
      x(\nu') & \text{if \(\nu(0)>0\),} \\
      S\bigl((Sx)(\hat\nu)\bigr) = S(x(\hat\nu')) & \text{if \(\nu(0)=0\)}
    \end{cases}
  \end{equation}
  where
  \begin{alignat}{2}
    \nu' &\colon\setzero{r}\to\setzero{p}, &\quad \nu'(i) &= \nu(i)-1, \\
    \hat\nu &\colon\setzero{r-1}\to\setzero{p+1}, &\quad \hat\nu(i) &= \nu(i+1), \\
    \hat\nu' &\colon\setzero{r-1}\to\setzero{p}, &\quad \hat\nu'(i) &= \nu(i+1)-1.
  \end{alignat}

  Now consider a term
  \begin{equation}
    \label{eq:SxJJ}
    \pm (Sx)(\nu_{1})\otimes(Sx)(\nu_{2})\otimes\dots\otimes(Sx)(\nu_{l+1})
  \end{equation}
  appearing in~\(\AWu{u}(Sx)\), coming from some cut of the interval~\(\setzero{p}\)
  determined by \(0=p_{0}\le p_{1}\le\dots\le p_{k+l+1}=p\).

  If \(p_{1}>0\), then we can write \eqref{eq:SxJJ} as
  \begin{equation}
    \pm S(x(\hat\nu_{1}'))\otimes x(\nu'_{2})\otimes\dots\otimes x(\nu'_{l+1}),
  \end{equation}
  which is, up to sign, a term appearing in~\((S\otimes1^{\otimes l})\,\AWu{u}(x)\).
  In fact, one gets all terms appearing in~\((S\otimes1^{\otimes l})\,\AWu{u}(x)\)
  this way. The permutation sign is the same as in~\(\AWu{u}(Sx)\), and the
  position sign changes by~\((-1)^{\deg{u}}\) because the right boundary of
  all intervals has decreased by~\(1\).

  Now assume \(p_{1}=0\) and that \(u(1)\) is final.
  Then \((Sx)(\nu_{1})=(Sx)(0)=e_{0}\), and we get exactly the terms
  in~\(e_{0}\otimes\AWu{u'}(Sx)\) with the same signs.

  Consider finally the case \(p_{1}=0\) and \(u(1)\) inner.
  This means that \(\nu_{1}(0)=0\) is the only value of~\(\nu_{1}\)
  coming from the first interval of the cut.
  Hence we can write \eqref{eq:SxJJ} as
  \begin{equation}
    \pm S\bigl((Sx)(\hat\nu_{1})\bigr)\otimes(Sx)(\nu_{2})\otimes\dots\otimes(Sx)(\nu_{l+1}).
  \end{equation}
  This is a term appearing in~\((S\otimes 1^{\otimes l})\,\AWu{u'}(Sx)\), and one gets all such terms
  with the same signs.

  The claim involving the diagonal of~\(C(EG)\)
  is the special case~\(u=(1,2)\).
\end{proof}

\section{Homotopy Gerstenhaber algebras}
\label{sec:hga}
\label{sec:hga-cochains}

A \newterm{homotopy Gerstenhaber algebra} (homotopy G-algebra, \newterm{hga})
is an augmented dga~\(A\) equipped with a certain multiplication
\( 
  \BB A \otimes \BB A \to \BB A
\) 
on the bar construction that together with the unit~\(\BBone\coloneqq[]\in \BB_{0}A\)
turns \(\BB A\) into a dg~bialgebra.
Specifically, we require that the components
\begin{equation}
  \EE_{kl}\colon \BB_{k}A \otimes \BB_{l}A \to \BB_{1}A = \desusp \bar A \stackrel{\susp}{\longrightarrow} \bar A \hookrightarrow A
\end{equation}
of the associated twisting cochain have the following form:
\(\EE_{10}\) and~\(\EE_{01}\) are the canonical map~\(\BB_{1}A\to A\),
and the remaining components vanish except possibly for~\(\EE_{1l}\)
with~\(l\ge1\). These conditions can be expressed in terms of the components
\begin{equation}
  E_{k} \colon A\otimes A^{\otimes k}
  \longrightarrow \bar A \otimes \bar A^{\otimes k}
  \xrightarrow{(\desusp)^{\otimes(k+1)}} \BB_{1}A \otimes \BB_{k}A \xrightarrow{\EE_{1k}} A
\end{equation}
(with the first map being the canonical projection along~\(\kk\)),
see~\cite[Sec.~3.2]{GerstenhaberVoronov:1995} or~\citehomog{Sec.~\refhomog{sec:def-hga}}.
We content ourselves with pointing out that the map~\(\cupone = - E_{1}\) satisfies the identities
\begin{gather}
  \label{eq:cupone-d}
  d(a\cupone b) + (da)\cupone b + (-1)^{\deg{a}}\, a\cupone (d b)
  = a\,b - (-1)^{\deg{a}\deg{b}}\,b\,a, \\
  \label{eq:cupone-hirsch}
  (ab)\cupone c
  = (-1)^{\deg{a}}\,a(b\cupone c)
  + (-1)^{\deg{b}\deg{c}}\,(a\cupone c)\,b
\end{gather}
for all~\(a\),~\(b\),~\(c\in A\),
which shows that it is a \(\cupone\)-product in the usual sense
satisfying the Hirsch formula.
This implies that the graded algebra~\(H^{*}(A)\) is commutative.

A morphism of hgas is a morphism of augmented dgas that is compatible
with the operations~\(E_{k}\) in the obvious way.

An hga~\(A\) with trivial operations~\(E_{k}=0\) for~\(k\ge1\)
is the same as a commutative augmented dga.
The product on~\(\BB A\) then is the usual shuffle product.
We say that an hga is \newterm{formal} if it is quasi-isomorphic
to~\(H^{*}(A)\) with the trivial hga structure.

The cohomology of any hga is a Gerstenhaber algebra with bracket
\begin{equation}
  \bigl\{[a],[b]\bigr\} = (-1)^{\deg{a}-1}\bigl[\,a\cupone b + (-1)^{\deg{a}\deg{b}}\,b\cupone a\,\bigr],
\end{equation}
\cf~\cite[Sec.~2.2]{GerstenhaberVoronov:1995}.
Obviously, a non-trivial Gerstenhaber bracket is an obstruction to formality.

Besides the Hochschild cochains of an associative algebra,
the cochain algebra of a simplicial set~\(X\) is the main example
of an hga, see~\cite[Sec.~2.3]{GerstenhaberVoronov:1995}.
In this case, the operations~\(E_{k}\) can be described by certain interval cut operations
\cite[\S 1.6.6]{BergerFresse:2004}
and essentially coincide for \(1\)-reduced~\(X\) with the multiplication on~\(\BB\,C^{*}(X)\)
constructed by Baues~\cite{Baues:1981}. Specifically, we have
\begin{equation}
  \label{eq:def-Ek}
  E_{k} = \transpp{\AWu{(1,2,1,3,1,\dots,1,k+1,1)}}
\end{equation}
for any~\(k\ge1\). In particular, the \(\cupone\)-product is given by~\(-\transpp{\AWu{(1,2,1)}}\),
which agrees with \Cref{ex:steenrod}. The existence of the \(\cuptwo\)-product
implies that the Gerstenhaber bracket in~\(H^{*}(X)\) is trivial.
Note that the interval cut operation appearing in~\eqref{eq:def-Ek} vanishes on \(0\)-simplices,
so that the image of its transpose lies in the augmentation ideal of~\(C^{*}(X)\)
for any choice of augmentation.

\begin{remark}
  \label{rem:other-hga-structure}
  In the definition of an hga
  there is an asymmetry between the first and the second factor
  for the product on~\(\BB A\).
  Some authors swap the factors, which entails changes in all formulas.
  The homotopy Gerstenhaber operations
  \begin{equation}
    \tilde E_{k}\colon C^{*}(X)^{\otimes k}\otimes C^{*}(X) \to C^{*}(X)
  \end{equation}
  obtained this way for a simplicial set~\(X\) satisfy
  \begin{equation}
    \label{eq:def-Ek-2}
  \begin{split}
    \tilde E_{k}
    &= \EE_{1k}\:T_{\BB_{k}C^{*}(X),\BB_{1}C^{*}(X)}\,(\desusp)^{\otimes(k+1)} \\
    &= (-1)^{k}\, \EE_{1k}\,(\desusp)^{\otimes(k+1)}\,T_{C^{*}(X)^{\otimes k},C^{*}(X)} \\
    &= (-1)^{k}\, \transpp{\AWu{(1,2,1,3,1,\dots,1,k+1,1)}}\,\transp{T_{C(X),C(X)^{\otimes k}}} \\
    &= (-1)^{k}\, \transp{\bigl(T_{C(X),C(X)^{\otimes k}}\,\AWu{(1,2,1,3,1,\dots,1,k+1,1)}\bigr)} \\
    &= (-1)^{k}\,\transpp{\AWu{(k+1,1,k+1,2,k+1,\dots,k+1,k,k+1)}},
  \end{split}
 \end{equation}
 where we have used \eqref{eq:susp-transposition} and~\eqref{eq:transpose-transposition}.
 (This corresponds to Baues' original definition.)
  For example, the \(\cupone\)-product is given by~\(\transpp{\AWu{(2,1,2)}}\) in this case.
  Our results in the following sections are valid for both kinds
  of homotopy Gerstenhaber structures.
\end{remark}

\section{Formality of~\texorpdfstring{\(BT\)}{BT}}
\label{sec:formality-BT}
\label{sec:construction-f}

Let \(T\) be a torus of rank~\(n\), considered as a simplicial group.
For example, \(T\) can be a compact torus~\(\cong(S^{1})^{n}\)
or an algebraic torus~\(\cong(\C^{\times})^{n}\). It can also be a simplicial torus,
that is, isomorphic to the bar construction~\(BN\) of some lattice~\(N\cong\Z^{n}\).
In any case, \(BT\) will denote the simplicial bar construction of~\(T\)
reviewed in \Cref{sec:universal-bundles}.

In this section we construct a map~\(\ffbar\colon H(BT)\to C(BT)\)
and show that its dual~\(\ffbar^*\colon C^*(BT)\to H^*(BT)\)
is a quasi-iso\-mor\-phism of hgas.
The definition of~\(f\) follows \citehomog{Sec.~\refhomog{sec:BT-formality-dga}}.
However, the proof that \(f^{*}\) annihilates all hga operation~\(E_{k}\)
is based on \Cref{thm:AWtildeu-equiv}.
Recall from Section~\ref{sec:universal-bundles}
that \(\pi\colon ET\to BT\) is the universal \(T\)-bundle,
\(e_0\) the canonical basepoint of~\(ET\)
and \(S\) the canonical contracting homotopy of~\(C(ET)\).

The following objects will play in role in our construction:
the commutative and cocommutative bialgebra~\(\Ll=H(T)\),
the cocommutative coalgebra~\(\Sl=H(BT)\) and its dual algebra~\(\Su=H^{*}(BT)\).
We choose a basis \(x_{1}\),~\ldots,~\(x_{n}\) of~\(\Ll_{1}\cong\kk^{n}\)
and transfer it
to a basis~\(y_{1}\),~\ldots,~\(y_{n}\) of~\(\Sl_{2}\)
via the transgression isomorphism~\(\Sl_{2}\cong\Ll_{1}\).
This gives a \(\kk\)-basis~\(y_{\alpha}\) for~\(\Sl\) with~\(\alpha\in\N^{r}\);
we also write \(y_{\alpha}=1\) for~\(\alpha=0\).
In terms of this basis, the diagonal of~\(\Sl\) has the form
\begin{equation}
  \label{eq:diagonal-S}
  \Delta y_{\alpha} = \sum_{\beta+\gamma=\alpha} y_{\beta}\otimes y_{\gamma}.
\end{equation}

For~\(1\le i\le n\), we choose a representative~\(c_{i}\in C_{1}(T)\)
of~\(x_{i}\in H_{1}(T)\). We assume that each~\(c_{i}\)
is a linear combination of loops at~\(1\in T\), meaning that
\(\partial_{0}\sigma=\partial_{1}\sigma=1_{0}\) for each~\(\sigma\) appearing in~\(c_{i}\).
For instance, we can fix a decomposition of a compact torus~\(T\) into circles
and let \(c_{i}\) can be the loop that starts at~\(1\)
and wraps with constant speed once around the \(i\)-th circle factor.
If \(T\) is a simplicial torus, then it is reduced,
so that any \(1\)-simplex is a loop.

Given that the dga~\(C(T)\) is commutative,
the~\(c_{i}\)'s define a quasi-iso\-mor\-phism of dg~bialgebras
\begin{equation}
  \label{eq:quiso-T}
  \phi\colon \Ll \to C(T),
  \quad
  x_{i}\mapsto c_{i}.
\end{equation}

Let \(\Kl\) be the homological Koszul complex associated to~\(\Ll\) and~\(\Sl\).
As a coalgebra, it is the tensor product
\begin{equation}
  \Kl = \Ll\otimes\Sl
\end{equation}
of the coalgebras~\(\Ll\) and~\(\Sl\),
and the \(\Ll\)-action is the canonical one on the first factor.
We write elements of~\(\Kl\) in the form~\(ay\) with~\(a\in\Ll\) and~\(y\in\Sl\).
We turn \(\Kl\) into a \(\Ll\)-dgc by defining
\begin{equation}
  d y_{i} = x_{i},\qquad
  d x_{i} = 0,
\end{equation}
which is independent of the chosen bases.
(The complex~\(\Kl\) is the twisted tensor product~\(\Ll\otimes_{t}\Sl\)
given by the twisting cochain~\(t\colon\Sl\to\Ll\)
corresponding to the isomorphism~\(\Sl_{2}\cong\Ll_{1}\),
\cf~\cite[Def.~30.8]{May:1968}.)
For an element~\(ay_{\alpha}\in\Kl\), we can write the differential as
\begin{equation}
  d(ay_{\alpha}) = \sum_i x_{i}a\,y_{\alpha|i}
\end{equation}
where the sum runs over the indices~\(i\) such that \(\alpha_{i}\ne0\),
and \(\alpha|i\) is obtained from~\(\alpha\) by decreasing \(\alpha_{i}\) by~\(1\).
Note that \(H(\Kl) = \kk\) and that
the map~\(\Kl\to\Sl\) induced by the augmentation~\(\Ll\to\kk\) is a map of dgcs.
We think of~\(\Kl\) as a model for~\(C(ET)\).

Via restriction along~\(\phi\), the dgc~\(C(ET)\) becomes a \(\Ll\)-dgc.
Following our previous work~\cite[Sec.~2.11]{Franz:2001},~\cite[Sec.~4.2]{Franz:2003},
we define a \(\Ll\)-equivariant map
\begin{equation}
  \ff\colon \Kl \to C(ET)
\end{equation}
recursively by setting
\begin{alignat}{2}
  \label{eq:def-f-0}
  \ff(1)       &= e_0,\\
  \label{eq:def-f-S}
  \ff(y) &= S\, \ff(d y) &\qquad& \text{if \(|y| > 0\)}, \\
  \label{eq:def-f-a}
  \ff(a y) &= a\cdot \ff(y) = \phi(a)\cdot \ff(y)
\end{alignat}
for~\(a\in\Ll\) and \(y\in\Sl\).

\begin{proposition}
  \label{thm:f-coalg}
  The map~\(\ff\) is a quasi-isomorphism of \(\Ll\)-dgcs.
\end{proposition}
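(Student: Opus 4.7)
The strategy is to verify in turn that $\ff$ is $\Ll$-equivariant, a chain map, a map of coalgebras, and finally a quasi-isomorphism; equivariance is built into the defining equation~\eqref{eq:def-f-a}, so that leaves three substantive tasks.

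\smallskip

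Since $\Kl=\Ll\otimes\Sl$ is free over~$\Ll$ and $d(ay)=(-1)^{\deg{a}}\,a\cdot dy$ for $a\in\Ll$, $y\in\Sl$, the equivariance reduces the chain-map verification to the identity $d\,\ff(y_\alpha)=\ff(d y_\alpha)$ for $y_\alpha\in\Sl$. I would prove this by induction on $|\alpha|$: by~\eqref{eq:def-f-S} and~\eqref{eq:homotopy-S} we have
\begin{equation*}
  d\,\ff(y_\alpha) = dS\,\ff(dy_\alpha) = \ff(dy_\alpha) - Sd\,\ff(dy_\alpha),
\end{equation*}
noting that $\ff(dy_\alpha)$ sits in positive degree so no $e_0$-correction appears. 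The remaining term $Sd\,\ff(dy_\alpha)$ vanishes because, by the induction hypothesis and $\Ll$-equivariance, $d\,\ff(dy_\alpha)=\ff(d^{2}y_\alpha)=0$.

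\smallskip

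The heart of the argument is the coalgebra compatibility $\Delta\,\ff=(\ff\otimes\ff)\,\Delta$, again to be established on $\Sl$ by induction on $|\alpha|$; once this is known on~$\Sl$, the fact that $C(ET)$ is a $C(T)$-dgc (and hence a $\Ll$-dgc via~$\phi$) combined with the bialgebra structure on $\Kl$ propagates it to all of $\Kl$. Using \Cref{thm:AW-S} with $u=(1,2)$,
\begin{equation*}
  \Delta\,\ff(y_\alpha) = \Delta S\,\ff(dy_\alpha) = (S\otimes 1)\,\Delta\,\ff(dy_\alpha) + e_{0}\otimes S\,\ff(dy_\alpha).
\end{equation*}
Writing $dy_\alpha=\sum_i x_i y_{\alpha|i}\in\Ll\otimes\Sl$ and invoking the induction hypothesis on the (lower-degree) factors $y_{\alpha|i}$, the first summand expands into $(\ff\otimes\ff)(d\otimes 1+1\otimes d)\Delta y_\alpha$ after applying $S\otimes 1$. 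The contribution of $(d\otimes 1)\Delta y_\alpha$ telescopes, via the defining relation $\ff(y_\beta)=S\ff(dy_\beta)$, into the part of $\sum_{\beta+\gamma=\alpha}\ff(y_\beta)\otimes\ff(y_\gamma)$ with $\beta\neq 0$; the contribution of $(1\otimes d)\Delta y_\alpha$ vanishes because $S^{2}=0$ and $S e_{0}=0$ by~\eqref{eq:properties-S}. Finally the second summand $e_{0}\otimes S\,\ff(dy_\alpha)=\ff(1)\otimes\ff(y_\alpha)$ supplies exactly the missing $\beta=0$ term, closing the induction. Compatibility with augmentations is straightforward: $\ff(1)=e_{0}$ while $\ff(y_\alpha)$ lies in positive degree for $\alpha\neq 0$.

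\smallskip

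I expect the coalgebra step to be the main obstacle, because bookkeeping the Koszul signs (all of which happen to be trivial since each $\deg{y_\beta}$ is even) and the precise cancellation via $S^{2}=0$, $Se_{0}=0$ must be executed carefully; once this is set up, quasi-isomorphy is immediate: $\Kl$ has acyclic augmentation ideal as a standard Koszul complex, $ET$ is contractible so $H(C(ET))=\kk$, and $\ff$ sends the generator~$1$ of $H_{0}(\Kl)$ to the generator~$[e_{0}]$ of $H_{0}(C(ET))$.
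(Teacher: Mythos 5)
Your plan is correct and follows essentially the same route as the proof the paper defers to its references (\cite[Prop.~4.3]{Franz:2003} and the companion paper): define \(\ff\) by \(\Ll\)-equivariance and the contracting homotopy, check the chain and coalgebra properties on \(\Sl\) by induction using \(\Delta S=(S\otimes1)\,\Delta+e_{0}\otimes S\), \(SS=0\), \(Se_{0}=0\) and the primitivity of the~\(c_{i}\), and conclude quasi-isomorphy from \(H(\Kl)=\kk=H(C(ET))\) with \(1\mapsto e_{0}\). The only point to watch is that the equivariant propagation from \(\Sl\) to \(\Kl\) must be interleaved with the induction (since \(dy_{\alpha}\in\Ll_{1}\otimes\Sl\) rather than \(\Sl\)), which your appeal to the \(C(T)\)-dgc structure and the bialgebra map \(\phi\) already covers.
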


\begin{proof}
  See~\cite[Prop.~4.3]{Franz:2003} or~\citehomog{Prop.~\refhomog{thm:f-coalg}}.
\end{proof}

For a \(1\)-biased surjection~\(u\colon\setone{k+l+1}\to\setone{l+1}\),
we now consider the map
\begin{equation}
  \AWhatu{u}
  = (1\otimes \pi_{*}^{\otimes l})\,\AWu{u}
  \colon C(ET)\to C(ET)\otimes C(BT)^{\otimes l}.
\end{equation}
Notice that \(\AWhatu{u}\) is the composition of the canonical map~\(C(ET)\to C(ET\times BT)\)
with the map~\(\AWtildeu{u}\) defined in~\eqref{eq:def-AW-tilde-u}.

\begin{lemma}
  \label{thm:AWhatu-f-zero}
  The composition
  \( 
    \AWhatu{u}\,\ff
    \colon \Kl\to C(ET)\otimes C(BT)^{\otimes l}
  \) 
  vanishes for all strongly \(1\)-biased surjections~\(u\colon\setone{k+l+1}\to\setone{l+1}\).
\end{lemma}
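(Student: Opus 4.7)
The plan is to establish $C(T)$-equivariance for $\AWhatu{u}$, reduce via $\Ll$-equivariance of~$f$ to showing $\AWhatu{u}\,f(y_\alpha)=0$ for all $y_\alpha\in\Sl$, and then run a double induction with primary index $|\alpha|$ and subsidiary index the domain size of~$u$. The equivariance is not among the cited results, but it follows from the factorization $\AWhatu{u}=\AWtildeu{u}^{ET,BT}\,(1,\pi)_*$, where $(1,\pi)\colon ET\to ET\times BT$, $e\mapsto(e,\pi(e))$, is $T$-equivariant because $\pi$ is $T$-invariant, and $\AWtildeu{u}^{ET,BT}$ is $C(T)$-equivariant by \Cref{thm:AWtildeu-equiv}.

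The base case $|\alpha|=0$ is immediate: $f(1)=e_0$ and $\AWhatu{u}(e_0)=0$ because $\deg u\ge1$ for strongly $1$-biased~$u$, so the interval cut operation vanishes on the $0$-simplex~$e_0$. For $|\alpha|>0$, I would write $f(y_\alpha)=S\,f(dy_\alpha)$ and split on whether $u(1)=1$ (necessarily inner, since~$1$ is repeated) or $u(1)=s\ne1$ (necessarily final).

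In the first case, the projected form of \Cref{thm:AW-S} reads
\[
\AWhatu{u}\,S = (-1)^{\deg u}(S\otimes 1^{\otimes l})\,\AWhatu{u} + (S\otimes 1^{\otimes l})\,\AWhatu{u'}\,S.
\]
Applied to $f(dy_\alpha)=\sum_i\phi(x_i)\cdot f(y_{\alpha|i})$ and summed over~$i$, the first summand is killed by equivariance and the primary induction, while the second simplifies to $(S\otimes 1^{\otimes l})\,\AWhatu{u'}\,f(y_\alpha)$. If $u'$ is still strongly $1$-biased, the subsidiary induction closes the case; otherwise $\deg u=1$ and $u'$ is a permutation of~$\setone{l+1}$, in which case expanding via the dgc-property of~$f$ and coassociativity on~$\Sl$ shows that the first tensor factor of $\AWhatu{u'}\,f(y_\alpha)$ has the form~$f(y_{\alpha'})\in\{e_0\}\cup\im S$, which~$S$ annihilates by~\eqref{eq:properties-S}. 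In the second case, \Cref{thm:AW-AW12} together with the dgc-property of~$f$ yields
\[
\AWhatu{u}\,f(y_\alpha) = (1\otimes\pi_*^{\otimes l})\,\transposition{s}{1}\sum_{\beta+\gamma=\alpha}f(y_\beta)\otimes\AWu{u'}\,f(y_\gamma);
\]
tracking positions through the cyclic permutation~$\tau_s$ and the projection, each summand equals a signed insertion of $\pi_*\,f(y_\beta)$ at position~$s$ in $\AWhatu{u'}\,f(y_\gamma)$. The terms with $\beta\ne0$ vanish by the primary induction (since $u'$ is again strongly $1$-biased and $|\gamma|<|\alpha|$), and the $\beta=0$ term vanishes by the subsidiary induction on domain size.

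The main obstacle will be orchestrating the two inductions so that the $\beta=0$ summand in the second case, whose $|\alpha|$ is unchanged, is genuinely covered by the strict decrease in domain from~$u$ to~$u'$. The sign tracking for~$\tau_s$ is the only delicate computation; conceptually, the entire proof is a transcription of \Cref{thm:AW-S}, \Cref{thm:AW-AW12}, and \Cref{thm:AWtildeu-equiv} combined with the equivariance observation.
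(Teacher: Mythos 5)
Your proposal is correct and takes essentially the same route as the paper's proof: the same case split on~$u(1)$, the same inputs (\Cref{thm:AWtildeu-equiv}, \Cref{thm:AW-AW12}, \Cref{thm:AW-S}, the dgc property of~$\ff$ and the relations $SS=0$, $S e_0=0$), and the same double induction, which the paper runs on~$k+l$ and on the degree in~$\Kl$. The orchestration you flag as the main obstacle is unproblematic: the lexicographic order on~$(|\alpha|,\,k+l)$ covers every recursive call, since each one strictly decreases either $|\alpha|$ or, with $|\alpha|$ fixed, the domain size of the surjection.
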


\begin{proof}
  We proceed by induction on~\(k+l\) and on the degree
  of~\(c=a y\in\Kl\), where \(a\in\Ll\) and \(y\in\Sl\).
  Since \(\AWu{u}\) vanishes on \(0\)-chains,
  there is nothing to prove for~\(|c|=0\),
  and also not for~\(k+l=0\) because there are no strongly biased surjections
  in this case.

  Assume \(\deg a>0\). By \Cref{thm:AWtildeu-equiv},
  the map~\(\AWhatu{u}\)
  is \(C(T)\)-equivariant. Hence
  \begin{equation}
    \AWhatu{u}\,\ff(a y)
    = \AWhatu{u}\bigl(\phi(a)\cdot \ff(y)\bigr)
    = \phi(a)\cdot \AWhatu{u}\,\ff(y)
     = 0
  \end{equation}
  by \eqref{eq:def-f-a} and induction.

  If \(u(1)\ne1\), then \(u(1)\) is final,
  hence \(u'\colon\setone{k+l}\to\setone{l}\) is strongly \(1\)-biased as well.
  \Cref{thm:AW-AW12} and \Cref{thm:f-coalg} together with induction
  therefore imply
  \begin{equation}
  \begin{split}
    \AWhatu{u} \ff(y)
    &= \transposition{u(1)}{1}\bigl(\pi_{*}\otimes \AWhatu{u'}\bigr)\Delta\,\ff(y) \\
    &= \transposition{u(1)}{1}\bigl(\pi_{*}\ff\otimes \AWhatu{u'}\,\ff\bigr)\,\Delta(y)
     = 0.
  \end{split}    
  \end{equation}

  It remains the case \(\deg{a}=0\) and \(u(1)=1\).
  Then \(u(1)\) is inner.
  Using definition~\eqref{eq:def-f-S} and \Cref{thm:AW-S}, we get
  \begin{equation}
  \begin{split}
    \AWhatu{u},\ff(y)
    &= (1\otimes \pi_{*}^{\otimes l})\,\AWu{u}\,S \ff(dy) \\
    &= (-1)^{\deg u}\,(S\otimes \pi_{*}^{\otimes l})\,\AWu{u}\,\ff(dy)
       + (S\otimes \pi_{*}^{\otimes l})\,\AWu{u'}\,S\,\ff(dy) \\
    &= (-1)^{\deg u}\,(S\otimes \pi_{*}^{\otimes l})\,\AWu{u}\,\ff(dy)
       + (S\otimes \pi_{*}^{\otimes l})\,\AWu{u'}\,\ff(y) \\
    &= (-1)^{\deg u}\,(S\otimes 1^{\otimes l})\,\AWhatu{u}\,\ff(dy) + (S\otimes 1^{\otimes l})\,\AWhatu{u'}\,\ff(y) \\
    &= (S\otimes 1^{\otimes l})\,\AWhatu{u'}\,\ff(y),
  \end{split}    
  \end{equation}
  again by induction.
  If \(u'\colon\setone{k+l}\to\setone{l}\) is strongly \(1\)-biased as well, then we are done by induction.

  If it is not, then \(\deg{u'}=0\), and \(\AWu{u'}\) is a permutation of
  the iterated diagonal \(C(ET)\to C(ET)^{\otimes l}\), \cf~\Cref{thm:AW-AW12}.
  \Cref{thm:f-coalg} therefore tells us that
  \(\AWu{u'}\,\ff(c)\) is a tensor product of terms~\(\ff(y')\) with~\(y'\in\Sl\),
  and the same holds true for~\(\AWhatu{u'}\).
  In particular, the first factor of the tensor product is of this form.
  The properties~\eqref{eq:properties-S} imply that \(S\) vanishes on such a term
  and hence so does \(\AWhatu{u}\) in this case.
\end{proof}

Since \(T\) acts trivially on~\(BT\),
the action of~\(\Ll\) on~\(C(BT)\) is given by the augmentation~\(\epsilon\colon\Ll\to\kk\).
The composition~\(\pi_{*}\ff\colon\Kl\to C(BT)\) therefore induces a dgc map
\begin{equation}
  \ffbar\colon \Sl = \kk\otimes_{\Ll}\Kl\to C(BT),
  \quad
  y\mapsto \pi_{*}\ff(y).
\end{equation}

\begin{theorem} \( \)
  \label{thm:formality-BT}
  \begin{enumroman}
  \item The dgc map~\(\ffbar\colon\Sl\to C(BT)\)
    induces the identity in homology, and
    \(\AWu{u}\ffbar\) vanishes
    for all strongly biased surjections~\(u\).
  \item The transpose~\(\transpp{\ffbar}\colon C^{*}(BT)\to\Su\)
    induces the identity in cohomology, and
    the composition~\(\transpp{\ffbar}\transpp{\AWu{u}}\)
    vanishes for all strongly biased surjections~\(u\).
    In particular, \(BT\) is homotopy Gerstenhaber formal.
  \item Fix a decomposition~\(T\cong(S^{1})^{n}\) into circles.
    For any~\(1\le i\le n\), choose a representative~\( c_{i}\) of~\(x_{i}\)
    that lies in the \(i\)-th circle factor.
    Then \(\ffbar\) and~\(\transpp{\ffbar}\) are  natural
    with respect to coordinatewise inclusions and projections of tori.
  \end{enumroman}
\end{theorem}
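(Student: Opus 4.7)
The plan is to establish (i), from which (ii) follows by transposing and inserting the explicit form of the hga operations, and (iii) drops out by inspection of the recursive construction of \(\ff\).

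For (i), the map \(\ffbar\) is a dgc map because \(\ff\) is a \(\Ll\)-equivariant dgc map by \Cref{thm:f-coalg} and the \(\Ll\)-action on \(C(BT)\) factors through the augmentation (since \(T\) acts trivially on \(BT\)). To see that it induces the identity on homology, I would evaluate on the algebra generators of \(\Sl\): by construction \(\ffbar(y_i) = \pi_*S\,\ff(x_i)\) is a \(2\)-cycle in \(BT\) representing the transgression of \(x_i\), which is precisely \(y_i\). Since \(\ff\) is a quasi-isomorphism of \(\Ll\)-dgcs between free \(\Ll\)-resolutions of \(\kk\), its \(\Ll\)-descent \(\ffbar\) is a quasi-isomorphism of dgcs, and agreement on generators forces the induced map in homology to be the identity.

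The substantive content of (i) is the vanishing of \(\AWu{u}\ffbar\) for every strongly biased surjection \(u\colon\setone{k+l+1}\to\setone{l+1}\). Naturality of interval cut operations along the projection \(\pi\colon ET\to BT\) gives
\begin{equation*}
  \AWu{u}\,\ffbar = \pi_{*}^{\otimes(l+1)}\,\AWu{u}^{ET}\,\ff = (\pi_{*}\otimes 1^{\otimes l})\,\AWhatu{u}\,\ff.
\end{equation*}
When \(u\) is strongly \(1\)-biased, this vanishes by \Cref{thm:AWhatu-f-zero}. For a general strongly biased \(u\) with distinguished value \(s>1\), pick the transposition \(\sigma=(1\ s)\in S_{l+1}\); then \(\sigma\cdot u\) is strongly \(1\)-biased. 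The \(S_{l+1}\)-equivariance of the operad morphism \(u\mapsto\AWu{u}\) yields \(\AWu{\sigma\cdot u}=\sigma\cdot\AWu{u}\), where \(\sigma\) permutes the output tensor factors up to Koszul signs; invertibility of this action then reduces the general strongly biased case to the strongly \(1\)-biased one already handled.

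Part (ii) follows by transposition, since \(\transpp{\ffbar}\circ\transpp{\AWu{u}} = \pm\transpp{(\AWu{u}\ffbar)}\) and \(\transpp{\ffbar}\) is a dga quasi-isomorphism dual to the dgc quasi-isomorphism \(\ffbar\). By \eqref{eq:def-Ek}, each hga operation \(E_k\) on \(C^*(BT)\) is the transpose of \(\AWu{u_k}\) for \(u_k=(1,2,1,3,1,\dots,1,k+1,1)\), which is strongly \(1\)-biased, so \(\transpp{\ffbar}\circ E_k=0\) for every \(k\ge 1\). This exhibits \(\transpp{\ffbar}\) as a morphism of hgas to \(\Su\) with its trivial hga structure, which is the formality statement. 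Part (iii) is a direct check: with the coordinatewise choice of representatives \(c_i\), the canonical basepoint \(e_0\), the homotopy \(S\), the map \(\phi\) and the \(\Ll\)-action all commute with coordinatewise inclusions and projections of tori, hence so do \(\ff\), \(\ffbar\), and \(\transpp{\ffbar}\). The main technical obstacle I anticipate is the Koszul-sign bookkeeping in the \(S_{l+1}\)-reduction from strongly biased to strongly \(1\)-biased surjections, but this is formal once \(\AW\) is known to be a morphism of operads.
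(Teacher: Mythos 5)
Your treatment of the vanishing statements, of the dualization in (ii), and of the naturality in (iii) is essentially the paper's own argument: you factor \(\AWu{u}\,\ffbar\) through \(\AWhatu{u}\,\ff\), invoke \Cref{thm:AWhatu-f-zero} for strongly \(1\)-biased \(u\), reduce the general strongly biased case via the symmetric-group equivariance of \(u\mapsto\AWu{u}\), and then transpose, using that the operations \(E_{k}\) (in either convention of \Cref{rem:other-hga-structure}) are transposes of interval cuts for strongly biased surjections; the naturality check in (iii) from the recursion \eqref{eq:def-f-0}--\eqref{eq:def-f-a} also matches the paper. The one place you genuinely diverge is the claim that \(H(\ffbar)\) is the identity, which the paper simply quotes from \cite{Franz:homog}. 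Your substitute argument has a weak link: \(C(ET)\) is not a free \(\Ll\)-module, and a quasi-isomorphism of \(\Ll\)-dgcs does not automatically descend to a quasi-isomorphism after applying \(\kk\otimes_{\Ll}-\), since that functor is not exact over the exterior algebra \(\Ll\); so ``free \(\Ll\)-resolutions of \(\kk\), hence the descent is a quasi-isomorphism'' is not justified as stated. The detour is also unnecessary: because \(\ffbar\) is a dgc map, \(H(\ffbar)\colon\Sl\to H(BT)\cong\Sl\) is a coalgebra map, so its transpose is an algebra endomorphism of the polynomial algebra \(\Su\), which is generated in degree \(2\); hence it suffices to verify that \(\ffbar(y_{i})=\pi_{*}S(c_{i}\cdot e_{0})\) represents \(y_{i}\), i.e., your transgression computation, which itself deserves a line of justification (note \(dS(c_{i}\cdot e_{0})=c_{i}\cdot e_{0}\) and \(\pi_{*}(c_{i}\cdot e_{0})\) is degenerate, so \(\ffbar(y_{i})\) is indeed a cycle mapping to the transgression class). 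With that repair your part (i) is complete, and parts (ii) and (iii) go through as you describe.
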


\begin{proof}
  That \(H(\ffbar)\) is the identity map is shown in
  the proof of~\citehomog{Thm.~\refhomog{thm:quiso-ffbar}}.

  Given a strongly biased surjection~\(u\colon\setone{k+l+1}\to\setone{l+1}\),
  choose a permutation \(\tau\in S_{l+1}\)
  such that \(\tilde u=\tau\cdot u\) is strongly \(1\)-biased.
  The composition~\(\AWu{\tilde u}\,\ffbar=(\pi_{*}\otimes 1^{\otimes l})\,\AWhatu{\tilde u}\,\ff\)
  vanishes by \Cref{thm:AWhatu-f-zero},
  hence so does \(\AWu{u}\,\ffbar\) by the equivariance of the
  interval cut operations with respect to the symmetric group.
  This proves our first claim.

  The second claim follows by dualizing,
  noting that the homotopy Gerstenhaber operations~\(E_{k}\)
  are the transposes of interval cut operations
  for certain strongly biased surjections, see formula~\eqref{eq:def-Ek}
  and \Cref{rem:other-hga-structure}.
  
  The naturality of~\(\ffbar\) and~\(\transpp{\ffbar}\)
  with respect to coordinatewise inclusions and projections
  follows directly from the recursive formulas~\eqref{eq:def-f-0}--\eqref{eq:def-f-a}.
  The homotopy operator~\(S\) commutes with Cartesian products,
  and the induced maps between the complexes~\(C(ET)\) are equivariant.
  (For projections, this uses again that we work with normalized chains.)
\end{proof}

\begin{addendum}
  \label{thm:cuptwo-BT}
  Assume that \(2\) is invertible in~\(\kk\). Then one can choose representatives~\(( c_{i})\)
  such that \(\transpp{\ffbar}\) furthermore annihilates all \(\cuptwo\)-products of cocycles.
\end{addendum}

\begin{proof}
  See~\citehomog{Prop.~\refhomog{thm:ffbar-cuptwo}}.
\end{proof}

\section{Davis\texorpdfstring{--}{-}Januszkiewicz spaces}
\label{sec:dj}

We now turn to Davis--Janusz\-kie\-wicz spaces.
The natural setting for our results is that of simplicial posets.
We recall the relevant definitions from~\cite[Secs.~2.8, 3.5, 4.10]{BuchstaberPanov:2015}.

A \newterm{simplicial poset} is a poset~\(\Sigma\) with initial element~\(\hat0\)
such that for any~\(\sigma\in\Sigma\) the interval~\([\hat0,\sigma]\)
is isomorphic to the face poset of a simplex.
The rank of~\(\sigma\) is the dimension of that simplex plus~\(1\).
By a \newterm{simplicial subposet} of~\(\Sigma\) we mean a subposet containing \([\hat0,\sigma]\)
for each~\(\sigma\) it contains.
Any simplicial complex is a simplicial poset, and its subcomplexes
are simplicial subposets.
We assume all our simplicial posets to be finite, with vertices contained
in a finite set~\(V\).
Note that we allow \newterm{ghost vertices}~\(v\in V\) not appearing in~\(\Sigma\).
By abuse of notation, we denote the complete simplicial complex
on the vertex set~\(V\) by the same letter.

The \newterm{folding map} of~\(\Sigma\) is the map~\(\Sigma\to V\)
that sends each~\(\sigma\in\Sigma\)
to the simplex determined by the vertices in~\(\sigma\).
A \newterm{vertex-pres\-erv\-ing morphism} of simplicial posets~\(\Sigma\to\Sigma'\)
on the same vertex set~\(V\)
is a map of posets that is compatible with the folding maps.
For example, the inclusion of a simplicial subposet is vertex-pre\-serv\-ing,
as is the folding map itself.

The \newterm{face ring}~\(\kk[\Sigma]\) of~\(\Sigma\) is the quotient
of the polynomial algebra~\(\kk[t_{\sigma}|\sigma\in\Sigma]\)
by the ideal generated by the elements
\begin{equation}
  t_{\hat0}-1,
  \qquad
  t_{\sigma}t_{\tau} - t_{\sigma\wedge\tau}\sum_{\!\!\rho\in\sigma\vee\tau\!\!}t_{\rho}
\end{equation}
with~\(\sigma\),~\(\tau\in\Sigma\).
Here \(\sigma\vee\tau\) denotes the join of~\(\sigma\) and~\(\tau\).
If it is empty, then the sum gives~\(0\), and \(t_{\sigma}t_{\tau}=0\) in~\(\kk[\Sigma]\).
If the join of~\(\sigma\) and~\(\tau\) is non-empty, then their meet is a unique element~\(\sigma\wedge\tau\in\Sigma\).
We grade \(\kk[\Sigma]\) by giving each~\(t_{\sigma}\) twice its rank as degree.
The face ring of a simplicial complex is its Stanley--Reisner ring
with the usual (even) grading.
A vertex-preserving map~\(\kappa\colon\Sigma\to\Sigma'\) induces the algebra map
\begin{equation}
  \kappa^{*}\colon \kk[\Sigma']\to\kk[\Sigma],
  \qquad
  t_{\sigma'} \mapsto \!\!\sum_{\kappa(\sigma)=\sigma'}\!\! t_{\sigma}.
\end{equation}
In particular, the folding map
makes \(\kk[\Sigma]\) an algebra over
the polynomial ring~\(\kk[V]\).

For any simplicial poset~\(\Sigma\) we have
\begin{equation}
  \label{eq:lim-SP}
  \kk[\Sigma] = \lim_{\sigma\in\Sigma}\kk[\sigma] \; ;
\end{equation}
moreover, for any simplicial subposets~\(\Sigma_{1}\),~\(\Sigma_{2}\subset\Sigma\)
such that~\(\Sigma=\Sigma_{1}\cup\Sigma_{2}\) one has a short exact sequence
\begin{equation}
  \label{eq:MV-SR}
  0 \longrightarrow \kk[\Sigma]
  \longrightarrow \kk[\Sigma_{1}]\oplus \kk[\Sigma_{2}]
  \longrightarrow \kk[\Sigma_{1}\cap\Sigma_{2}]
  \longrightarrow 0,
\end{equation}
\cf~\cite[Lemma~3.5.11]{BuchstaberPanov:2015} and its proof.
The first arrow above is the pair of the restriction maps,
and the second is the difference of the restriction maps.

Let \(S^{1}\) be a circle with classifying space~\(BS^{1}\),
and let \(T=(S^{1})^{V}\) so that \(BT=(BS^{1})^{V}\).
We choose a basepoint~\(*\in BS^{1}\). For a simplex~\(\sigma\in\Sigma\) we define
\begin{equation}
  \label{eq:def-DJ-sigma}
  \DJ_{\sigma} = (BS^{1})^{\sigma} \times \{*\}^{V\setminus\sigma},
\end{equation}
where the exponents indicate the indices in the Cartesian product, and
\begin{equation}
  \label{eq:def-DJ-SP}
  \DJ_{\Sigma} = \colim_{\sigma\in\Sigma}\DJ_{\sigma}.
\end{equation}
If \(\Sigma\) is a simplicial complex, then we have
\begin{equation}
  \DJ_{\Sigma} = \bigcup_{\sigma\in\Sigma}\DJ_{\sigma} \subset BT.
\end{equation}
We call \(\DJ_{\Sigma}\) the \newterm{Davis--Janusz\-kie\-wicz space} associated to~\(\Sigma\);
it is also denoted by~\((BS^{1},*)^{\Sigma}\) or~\(\ZZ_{\Sigma}(BS^{1},*)\).
It is simply connected and homotopy-equivalent to the Borel construction
of the moment-angle complex~\(\ZZ_{\Sigma}(D^{2},S^{1})\)
\cite[Thm.~4.3.2, Ex.~4.10.9]{BuchstaberPanov:2015}.
Since moment-angle complexes are equivariant strong deformation retracts
of complements of complex coordinate subspace arrangements
\cite[Prop.~20]{Strickland:1999}, \cite[Thm.~4.7.5]{BuchstaberPanov:2015},
this implies that Davis-Januszkiewicz spaces are also homotopy-equiv\-a\-lent
to Borel constructions of smooth toric varieties,
\cf~\cite[Thm.~5.4.5]{BuchstaberPanov:2015}.
We are going to reprove along the way that one has an isomorphism of graded algebras
\begin{equation}
  H^{*}(\DJ_{\Sigma}) \cong \kk[\Sigma],
\end{equation}
natural with respect to vertex-preserving morphisms.
In particular, the morphism~\(\DJ_{\Sigma}\to BT\) induces the map~\(\kk[V]\to\kk[\Sigma]\)
in cohomology.

Note that we can read \eqref{eq:def-DJ-sigma} and~\eqref{eq:def-DJ-SP}
both topologically and simplicially. In the simplicial context,
we choose \(S^{1}=B\Z\), so that \(\DJ_{\sigma}\) is \(1\)-reduced for any~\(\sigma\in\Sigma\)
and hence so is \(\DJ_{\Sigma}\).
Topologically, we choose \(BS^{1}=\CP^{\infty}\) with its usual CW~structure.
Then the Davis--Janusz\-kie\-wicz space becomes a CW~complex.
To distinguish the two constructions, we write the topological Davis--Janusz\-kie\-wicz space
as~\(\TopDJ_{\Sigma}\).

\begin{lemma}
  \label{thm:DJ-simp-top}
  There is a homotopy equivalence~\(\DJ_{\Sigma}\to\Simp(\TopDJ_{\Sigma})\),
  natural with respect to vertex-preserving morphisms.
\end{lemma}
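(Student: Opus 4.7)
The plan is to build the map as the composition
\[
DJ_{\Sigma} \xrightarrow{\eta} \Simp(|DJ_{\Sigma}|) \xrightarrow{\Simp(\psi_{\Sigma})} \Simp(\TopDJ_{\Sigma}),
\]
where $\eta$ is the unit of the realization--singular simplices adjunction (a natural weak equivalence of simplicial sets for any source) and $\psi_{\Sigma}\colon |DJ_{\Sigma}| \to \TopDJ_{\Sigma}$ is a continuous map to be constructed and shown to be a homotopy equivalence. Then apply $\Simp$, which carries homotopy equivalences of spaces to weak equivalences of simplicial sets.

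To construct $\psi_{\Sigma}$, first I would use that geometric realization preserves colimits and finite products of simplicial sets, so
\[
|DJ_{\Sigma}| \;\cong\; \colim_{\sigma\in\Sigma}\bigl(|B B \Z|^{\sigma} \times \{*\}^{V \setminus \sigma}\bigr).
\]
Fix once and for all a pointed cellular homotopy equivalence $\phi\colon |BB\Z| \to \CP^{\infty}$; such a $\phi$ exists because both sides are pointed $K(\Z,2)$-CW complexes (take any classifying map for the generator of $H^{2}(|BB\Z|;\Z)$ and invoke Whitehead). Its Cartesian powers, together with the identity on the points, give natural pointed homotopy equivalences $|DJ_{\sigma}| \to \TopDJ_{\sigma}$, compatible with the inclusions indexed by $\sigma \le \sigma'$ in~$\Sigma$ and with vertex-preserving morphisms of simplicial posets. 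Assembling them over the colimit defines $\psi_{\Sigma}$, and naturality is visibly preserved because the choice of $\phi$ is made once and uniformly.

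The hard part is showing that $\psi_{\Sigma}$ is genuinely a homotopy equivalence, not merely a levelwise one: colimits of weak equivalences need not be weak equivalences. To handle this, I would work cellularly from the start so that every structure map in both diagrams is the inclusion of a pointed CW subcomplex; then the gluing lemma for homotopy equivalences of CW pairs applies. Concretely, I would induct on the number of maximal simplices: when $\Sigma = \Sigma' \cup \{\tau\}$ with $\tau$ a maximal simplex, the pushout decomposition $DJ_{\Sigma} = DJ_{\Sigma'} \cup_{DJ_{\partial\tau}} DJ_{\tau}$ (and its topological counterpart) reduces the claim to the cases of a single simplex and of $\Sigma'$, combined with the gluing lemma. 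The base case is trivial since $\phi^{\sigma} \times \mathrm{id}$ is manifestly a homotopy equivalence of finite products of CW complexes. This completes the plan.
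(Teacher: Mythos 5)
Your construction is essentially the paper's: the paper likewise produces simplex-wise homotopy equivalences \(\DJ_{\sigma}\to\Simp(\TopDJ_{\sigma})\) from the fact that both \(B(B\Z)\) and \(\Simp(\CP^{\infty})\) are Eilenberg--Mac\,Lane complexes, realizes and assembles them over the colimit using that realization commutes with colimits, and then composes with the unit \(\DJ_{\Sigma}\to\Simp(|\DJ_{\Sigma}|)\); you merely build the pointwise equivalence topologically (\(|BB\Z|\to\CP^{\infty}\)) rather than simplicially and spell out the comparison of colimits that the paper leaves implicit. The one flaw is in the bookkeeping of your final induction: it cannot run on the number of maximal simplices, because deleting the maximal simplex \(\tau\) can increase that number (a single triangle versus its boundary), and the gluing lemma also requires the equivalence over \(\DJ_{\partial\tau}\), a case missing from your list of reductions. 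Both points are repaired by inducting instead on the total number of simplices of \(\Sigma\) (as the paper does in the Mayer--Vietoris argument for \Cref{thm:formality-DJ-simp}), since \(\Sigma'\) and \(\partial\tau\) each have strictly fewer simplices; with that change, and keeping the structure maps as CW inclusions so the gluing lemma applies, your argument is complete.
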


\begin{proof}
  Since \(B(B\Z)\) and~\(\Simp(\CP^{\infty})\) are both Eilenberg--Mac\,Lane complexes,
  they are homotopy-equivalent, whence a homotopy equivalence
  \( 
  \DJ_{\sigma} \to \Simp(\TopDJ_{\sigma})
  \) 
  and therefore also a homotopy equivalence
  \begin{equation}
    |\DJ_{\sigma}| \to |\Simp(\TopDJ_{\sigma})| \to \TopDJ_{\sigma},
  \end{equation}
  natural in~\(\sigma\in\Sigma\).
  Given that the topological realization functor is left adjoint
  to the functor~\(X\mapsto \Simp(X)\) \cite[Thm.~16.1]{May:1968},
  we obtain homotopy equivalences
  \begin{equation}
    |\DJ_{\Sigma}| = \bigl|\colim_{\sigma\in\Sigma}\DJ_{\sigma}\bigr|
    = \colim_{\sigma\in\Sigma}|\DJ_{\sigma}|
    \to \colim_{\sigma\in\Sigma}\TopDJ_{\sigma}
    = \TopDJ_{\Sigma},
  \end{equation}
  and finally
  \begin{equation}
    \DJ_{\Sigma} \to \Simp(|\DJ_{\Sigma}|) \to \Simp(\TopDJ_{\Sigma}),
  \end{equation}
  natural with respect to vertex-preserving morphisms.
\end{proof}

\begin{theorem}
  \label{thm:formality-DJ-simp}
  Let \(\DJ_{\Sigma}\) be the simplicial Davis--Janusz\-kie\-wicz space
  associated to the simplicial poset~\(\Sigma\).
  There is a quasi-iso\-mor\-phism of hgas
  \begin{equation*}
    \ffbar^{*}_{\Sigma}\colon C^{*}(\DJ_{\Sigma})\to\kk[\Sigma],
  \end{equation*}
  natural with respect to vertex-preserving morphisms of simplicial posets.
  In particular, \(\DJ_{\Sigma}\) is homotopy Gerstenhaber formal.
  The map~\(\ffbar^{*}_{\Sigma}\) depends on the choice of a representative~\(c\in C_{1}(S^{1})\)
  of the canonical generator of~\(H_{1}(S^{1})=\kk\).
\end{theorem}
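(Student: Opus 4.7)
The plan is to glue together the classifying-space formality maps of Theorem \ref{thm:formality-BT} over the simplicial poset $\Sigma$ and then verify the quasi-isomorphism claim by a Mayer--Vietoris induction.

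First, I would fix a representative $c \in C_{1}(S^{1})$ of the generator of $H_{1}(S^{1}) \cong \kk$; placing copies of $c$ in each circle factor of $T = (S^{1})^{V}$ provides all the representatives needed to invoke Theorem \ref{thm:formality-BT}. For every $\sigma \in \Sigma$, the subspace $\DJ_{\sigma}$ is simplicially isomorphic to $BT_{\sigma}$, where $T_{\sigma} = (S^{1})^{\sigma}$, and Theorem \ref{thm:formality-BT} supplies a hga quasi-isomorphism $\bar{f}^{*}_{\sigma}\colon C^{*}(\DJ_{\sigma}) \to \kk[\sigma]$. Clause~(iii) of that theorem, applied to the coordinate-wise inclusions $T_{\sigma} \hookrightarrow T_{\tau}$ for $\sigma \le \tau$, shows that the family $\{\bar{f}^{*}_{\sigma}\}_{\sigma \in \Sigma}$ is compatible with the restriction maps on both sides, hence defines a natural transformation between diagrams indexed by $\Sigma^{\mathrm{op}}$.

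Next, I would pass to the limit. Since the normalized chain functor preserves colimits of simplicial sets, its dual sends colimits to limits; combined with \eqref{eq:lim-SP} and \eqref{eq:def-DJ-SP}, this yields
\[
  \bar{f}^{*}_{\Sigma}\colon C^{*}(\DJ_{\Sigma}) = \lim_{\sigma \in \Sigma} C^{*}(\DJ_{\sigma}) \longrightarrow \lim_{\sigma \in \Sigma} \kk[\sigma] = \kk[\Sigma],
\]
natural with respect to vertex-preserving morphisms. The hga property comes for free: the operations $E_{k}$ for $k \ge 1$ are transposes of interval cut operations for strongly biased surjections by~\eqref{eq:def-Ek} and Remark~\ref{rem:other-hga-structure}, they commute with simplicial maps, and each $\bar{f}^{*}_{\sigma}$ annihilates them by Theorem~\ref{thm:formality-BT}(ii); hence so does $\bar{f}^{*}_{\Sigma}$.

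To show $\bar{f}^{*}_{\Sigma}$ is a quasi-isomorphism I would induct on the number of simplices of $\Sigma$. The base case $\Sigma = [\hat{0},\sigma]$ is Theorem~\ref{thm:formality-BT}. For the inductive step, pick a maximal $\sigma_{0} \in \Sigma$ and set $\Sigma_{2} = [\hat{0},\sigma_{0}]$ and $\Sigma_{1} = \Sigma \setminus \{\sigma_{0}\}$, so that $\Sigma_{1} \cap \Sigma_{2} = [\hat{0},\sigma_{0}] \setminus \{\sigma_{0}\}$; all three subposets are strictly smaller. The colimit description of $\DJ$ presents $\DJ_{\Sigma}$ as a pushout of simplicial sets along $\DJ_{\Sigma_{1} \cap \Sigma_{2}}$, producing a short exact sequence of normalized cochain complexes that maps compatibly via the $\bar{f}^{*}$'s to the sequence~\eqref{eq:MV-SR}. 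A five-lemma argument on the induced long exact sequences then completes the induction. The main obstacle I anticipate is the verification, for general simplicial posets rather than simplicial complexes, that $\DJ_{\Sigma}$ is indeed the expected pushout at the level of simplicial sets, so that the cochain Mayer--Vietoris sequence is in fact short exact; once this simplicial point is settled, everything else is routine.
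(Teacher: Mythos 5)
Your proposal is correct and follows essentially the same route as the paper: assembling the hga quasi-isomorphisms of \Cref{thm:formality-BT} over the poset via the limit description of~\(C^{*}(\DJ_{\Sigma})\) and~\(\kk[\Sigma]\), with naturality from part~(iii), and then proving the quasi-isomorphism property by a Mayer--Vietoris plus five-lemma induction on the size of~\(\Sigma\). The only cosmetic differences are that you fix a specific decomposition (removing a maximal simplex) where the paper allows any splitting into two proper simplicial subposets, and you explicitly flag the pushout\slash short-exactness point for the cochain sequence, which the paper also treats as immediate from the colimit definition in the simplicial setting.
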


\begin{proof}
  \let\SimpDJ\DJ
  By embedding \(c\) into the various circle factors of~\(T=(S^{1})^{V}\),
  we obtain representatives~\( c_{v}\in C_{1}(T)\)
  of the canonical basis~\((x_{v})\) for~\(H_{1}(T)\).
  
  We note that limits of hgas exist by the same construction as for dgas.
  From \Cref{thm:formality-BT}
  we get a family of compatible hga quasi-iso\-mor\-phisms
  \begin{equation}
    \ffbar_{\sigma}^{*}\colon C^{*}(\SimpDJ_{\sigma})\to H^{*}(\SimpDJ_{\sigma})=\kk[\sigma],
  \end{equation}
  which assemble to an hga map
  \begin{equation}
    \ffbar_{\Sigma}^{*}\colon C^{*}(\SimpDJ_{\Sigma}) \to \lim_{\sigma\in\Sigma} C^{*}(\SimpDJ_{\sigma})\to \lim_{\sigma\in\Sigma} \kk[\sigma] = \kk[\Sigma],
  \end{equation}
  which is natural with respect to vertex-preserving morphisms.
  Note that the map on the left is an isomorphism
  because we work simplicially.

  To see that \(\ffbar^{*}_{\Sigma}\) is a quasi-iso\-mor\-phism
  also in the case where \(\Sigma\) does not have a single maximal simplex,
  we proceed by induction on the size of~\(\Sigma\).
  We write \(\Sigma=\Sigma_{1}\cup\Sigma_{2}\) as the union of two proper simplicial subposets
  and the following Mayer--Vietoris diagram
  \begin{equation}
  \label{eq:diagram-sp-1}
  \begin{tikzcd}[column sep=small]
    0 \arrow{r} & C^{*}(\SimpDJ_{\Sigma}) \arrow{d}{\ffbar^{*}_{\Sigma}} \arrow{r}
    & C^{*}(\SimpDJ_{\Sigma_{1}})\oplus C^{*}(\SimpDJ_{\Sigma_{2}}) \arrow{d}{(\ffbar^{*}_{\Sigma_{1}},\ffbar^{*}_{\Sigma_{2}})} \arrow{r}
    & C^{*}(\SimpDJ_{\Sigma_{1}\cap\Sigma_{2}}) \arrow{d}{\ffbar^{*}_{\Sigma_{1}\cap\Sigma_{2}}} \arrow{r}
    & 0 \\
    0 \arrow{r} & \kk[\Sigma] \arrow{r}
    & \kk[\Sigma_{1}]\oplus\kk[\Sigma_{2}] \arrow{r}
    & \kk[\Sigma_{1}\cap\Sigma_{2}] \arrow{r}
    & 0 \mathrlap{.}
  \end{tikzcd}
  \end{equation}
  The bottom row is the exact sequence~\eqref{eq:MV-SR}, and the top row is the analogous exact sequence
  for cochain complexes. Because \(\Sigma_{1}\),~\(\Sigma_{2}\) and \(\Sigma_{1}\cap\Sigma_{2}\)
  have fewer simplices than~\(\Sigma\), the middle and right vertical arrows
  are quasi-iso\-mor\-phisms by induction, hence so is \(\ffbar^{*}_{\Sigma}\)
  by the five-lemma.
\end{proof}

\begin{corollary}
  \label{thm:formality-DJ}
  \Cref{thm:formality-DJ-simp} remains true for the topological
  Davis--Janusz\-kie\-wicz space~\(\TopDJ_{\Sigma}\).
  The representative~\(c\) still refers to the simplicial circle.
\end{corollary}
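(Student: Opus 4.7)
The plan is to pull the result back along the natural homotopy equivalence of \Cref{thm:DJ-simp-top}. Specifically, applying the contravariant singular cochain functor to the map \(\DJ_{\Sigma}\to\Simp(\TopDJ_{\Sigma})\) yields a map
\begin{equation*}
  C^{*}(\TopDJ_{\Sigma})=C^{*}\bigl(\Simp(\TopDJ_{\Sigma})\bigr)\longrightarrow C^{*}(\DJ_{\Sigma}),
\end{equation*}
and I would then define \(\ffbar_{\Sigma,\TopDJ}^{*}\) as the composition of this map with the quasi-isomorphism \(\ffbar_{\Sigma}^{*}\colon C^{*}(\DJ_{\Sigma})\to\kk[\Sigma]\) provided by \Cref{thm:formality-DJ-simp}.

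To verify that this composition is an hga quasi-isomorphism, I would argue each factor separately. The cup-\(i\) style hga operations on \(C^{*}(X)\) are defined via transposes of the interval cut operations \(\AWu{u}^{X}\) (see \eqref{eq:def-Ek}), and these interval cut operations are manifestly natural in the simplicial set~\(X\): any simplicial map~\(\phi\colon X\to Y\) satisfies \(\AWu{u}^{Y}\,\phi_{*}=\phi_{*}^{\otimes l}\,\AWu{u}^{X}\), whose transpose expresses that \(\transpp{\phi_{*}}\) is a morphism of hgas. Applied to the simplicial map from \Cref{thm:DJ-simp-top}, this shows the pullback map is an hga morphism; it is a quasi-isomorphism because the geometric realization of a homotopy equivalence of simplicial sets is a homotopy equivalence of topological spaces, and singular cohomology is invariant under such equivalences. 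The second factor \(\ffbar_{\Sigma}^{*}\) is already an hga quasi-isomorphism by \Cref{thm:formality-DJ-simp}, so the composition is as well.

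Naturality with respect to vertex-preserving morphisms follows from the two-step structure of the construction: the homotopy equivalence in \Cref{thm:DJ-simp-top} is natural with respect to such morphisms, and so is \(\ffbar_{\Sigma}^{*}\) by \Cref{thm:formality-DJ-simp}. The dependence on the chosen representative~\(c\in C_{1}(S^{1})\) (for the \emph{simplicial} circle \(B\Z\), as emphasised in the statement) enters only through the second factor \(\ffbar_{\Sigma}^{*}\). There is no genuine obstacle here beyond bookkeeping: the only potential subtlety is confirming that the naturality of \Cref{thm:DJ-simp-top} is strict (not merely up to homotopy), which is guaranteed by its construction as a colimit of adjunction units and a homotopy equivalence between Eilenberg--Mac\,Lane objects chosen once and for all in each factor.
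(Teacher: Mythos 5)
Your proposal is correct and follows essentially the same route as the paper: compose the simplicial formality map of \Cref{thm:formality-DJ-simp} with the cochain map induced by the natural homotopy equivalence of \Cref{thm:DJ-simp-top}, using that the homotopy Gerstenhaber operations on cochains (being transposes of interval cut operations) are natural in the simplicial set. The paper states this in one line; your write-up merely makes the naturality and quasi-isomorphism checks explicit.
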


\begin{proof}
  This follows from \Cref{thm:DJ-simp-top} and \Cref{thm:formality-DJ-simp}
  together with the naturality of the homotopy Gerstenhaber operations on cochains.
\end{proof}

\begin{addendum}
  \label{thm:cuptwo-DJ}
  Assume that \(2\) is invertible in~\(\kk\).
  The quasi-iso\-mor\-phism of hgas
  \( 
    f_{\Sigma}\colon C^{*}(\DJ_{\Sigma})\to\kk[\Sigma]
  \) 
  can furthermore be chosen
  to annihilate all \(\cuptwo\)-products of cocycles.
\end{addendum}

\begin{proof}
  This follows from \Cref{thm:cuptwo-BT}.
\end{proof}

We conclude with several immediate consequences
for loop spaces of Davis--Janusz\-kie\-wicz spaces.
A longer application appears in~\cite{Franz:torprod}
where we compute the cohomology rings of smooth toric varieties and
partial quotients of moment-angle complexes.
Additively, all the results below follow
from the dga formality of Davis--Janusz\-kie\-wicz spaces,
\cf~\cite[Sec.~8.4]{BuchstaberPanov:2015}.
We assume for the rest of this section that \(\kk\) is a principal ideal domain.

Baues~\cite[\S 2]{Baues:1981} has established a natural isomorphism of graded algebras
\begin{equation}
  \label{eq:cohomology-loops}
  H^{*}(\Omega\,|X|) = H^{*}(\BB \,C^{*}(X))
\end{equation}
for any \(1\)-reduced simplicial set~\(X\).
Here \(\Omega\,|X|\) denotes the Moore loop space of the topological realization of~\(X\).
If \(H^{*}(X)\) is free of finite type over~\(\kk\),
then \eqref{eq:cohomology-loops} is an isomorphism of bialgebras,
where the coproduct on the left-hand side is induced by the concatenation of loops
and the one on the right-hand side by the diagonal of the bar construction.

Recall that the loop space~\(\Omega X\) of a simply connected CW~complex~\(X\)
is homotopy equivalent to~\(\Omega\,|\SimpOne{X}|\).
(Here \(\SimpOne{X}\) denotes the \(1\)-reduced simplicial set of singular simplices
whose \(1\)-skeleton maps to a chosen basepoint of~\(X\).)
Note that the image of~\(\DJ_{\Sigma}\) in~\(S(\TopDJ_{\Sigma})\)
under the homotopy equivalence given by \Cref{thm:DJ-simp-top}
necessarily lies in~\(\SimpOne{\TopDJ_{\Sigma}}\)
with respect to the distinguished basepoint.

\let\DJ\TopDJ

\begin{proposition}
  \label{thm:iso-based-loops}
  There is an isomorphism of graded algebras
  \begin{equation*}
    H^{*}(\Omega\,\DJ_{\Sigma}) = \Tor_{\kk[\Sigma]}(\kk,\kk)
  \end{equation*}
  natural with respect to vertex-preserving morphisms of simplicial posets.
  If the homology of~\(\Omega\,\DJ_{\Sigma}\) is free over~\(\kk\),
  then this is an isomorphism of bialgebras.
\end{proposition}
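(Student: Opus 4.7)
The plan is to chain together Baues's description~\eqref{eq:cohomology-loops} of loop space cohomology with our hga formality result. Let $X$ denote the \emph{simplicial} Davis--Janusz\-kie\-wicz space associated to~$\Sigma$, which is $1$-reduced by construction. Under the homotopy equivalence of \Cref{thm:DJ-simp-top}, the loop space $\Omega\,\DJ_{\Sigma}$ is weakly equivalent to $\Omega\,|X|$, so Baues's natural isomorphism yields
  \begin{equation*}
    H^{*}(\Omega\,\DJ_{\Sigma}) \cong H^{*}(\BB\,C^{*}(X))
  \end{equation*}
  as graded algebras, naturally in~$\Sigma$.

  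Next I would apply the hga quasi-isomorphism $\ffbar^{*}_{\Sigma}\colon C^{*}(X)\to\kk[\Sigma]$ supplied by \Cref{thm:formality-DJ-simp}. Since any morphism of hgas induces a morphism of dg-bialgebras on bar constructions, this produces a quasi-isomorphism of dg-bialgebras $\BB\,\ffbar^{*}_{\Sigma}\colon \BB\,C^{*}(X) \to \BB\,\kk[\Sigma]$. The graded algebra $\kk[\Sigma]$ is commutative with trivial hga operations, so the multiplication on $\BB\,\kk[\Sigma]$ reduces to the classical shuffle product, giving $H^{*}(\BB\,\kk[\Sigma]) = \Tor_{\kk[\Sigma]}(\kk,\kk)$ as graded algebras. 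Concatenating these isomorphisms, all of which are natural with respect to vertex-preserving morphisms, proves the first assertion.

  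For the bialgebra statement, Baues's isomorphism is already one of bialgebras because $\kk[\Sigma]$ is free of finite type over the PID~$\kk$, and the transported isomorphism is then also one of bialgebras since bar constructions preserve coproducts. The freeness hypothesis on~$H_{*}(\Omega\,\DJ_{\Sigma})$ is needed precisely so that both sides of the claimed isomorphism carry honest $\kk$-bialgebra structures, without Künneth correction terms appearing either when dualizing the Pontryagin product on~$H_{*}(\Omega\,\DJ_{\Sigma})$ or when computing the coproduct on $\Tor_{\kk[\Sigma]}(\kk,\kk)$ from the bar coproduct on~$\BB\,\kk[\Sigma]$. I expect no substantial technical obstacle in this proof: the heavy lifting is done by Baues's theorem and by \Cref{thm:formality-DJ-simp}, and what remains is to thread these together and verify naturality at each step.
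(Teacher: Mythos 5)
Your argument is correct and follows essentially the same route as the paper: apply Baues's isomorphism~\eqref{eq:cohomology-loops} to a $1$-reduced model of~$\DJ_{\Sigma}$, push the hga formality quasi-isomorphism through the bar construction, and identify $H^{*}(\BB\,\kk[\Sigma])$ with $\Tor_{\kk[\Sigma]}(\kk,\kk)$, with the bialgebra refinement under the stated freeness hypothesis. The only cosmetic difference is that you work with the simplicial Davis--Januszkiewicz space via \Cref{thm:DJ-simp-top} and \Cref{thm:formality-DJ-simp}, whereas the paper uses $\SimpOne{\DJ_{\Sigma}}$ together with \Cref{thm:formality-DJ}, which amounts to the same thing.
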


By a result of Dobrinskaya \cite[Prop.~5.8]{Dobrinskaya:2009},
the freeness assumption holds if \(\Sigma\) is a shifted simplicial complex.

\begin{proof}
  Using~\eqref{eq:cohomology-loops} and \Cref{thm:formality-DJ},
  we have a chain of algebra isomorphisms
  \begin{equation}
    H^{*}(\Omega\,\DJ_{\Sigma}) = H^{*}\bigl(\BB \,C^{*}(\SimpOne{\DJ_{\Sigma}})\bigr)
    = H^{*}(\BB \, \kk[\Sigma]) = \Tor_{\kk[\Sigma]}(\kk,\kk)
  \end{equation}
  of algebras or bialgebras.
\end{proof}

We now turn to free loop spaces of Davis--Janusz\-kie\-wicz spaces.
Saneblidze~\cite[Sec.~6.1]{Saneblidze:2009} has described the cohomology ring
of the free loop space~\(L X\) of a simply connected space~\(X\) of finite type over~\(\kk\).
It is given by the Hochschild homology~\(\HH(C^{*}(\SimpOne X))\) together
with an explicit (non-associative) product on the Hochschild chains
in terms of the hga~structure of~\(C^{*}(\SimpOne X)\).\footnote{%
  Saneblidze claims the result for any commutative ring, without assuming finite type.
  However, the passage to cohomology after~\cite[Thm.~3]{Saneblidze:2009} appears to require some Künneth theorem.}
More generally, \(\HH(A)\) is naturally an algebra for any hga~\(A\). If \(A\) is graded commutative, then the product
again boils down to the shuffle product, \cf~\cite[Sec.~4.2]{Loday:1998} for the ungraded case.

\begin{proposition}
  There is an isomorphism of graded algebras
  \begin{equation*}
    H^{*}(L(\DJ_{\Sigma})) = \HH(\kk[\Sigma]),
  \end{equation*}
  natural with respect to vertex-preserving morphisms.
\end{proposition}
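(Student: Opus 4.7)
The plan is to combine the homotopy Gerstenhaber formality of Davis--Januszkiewicz spaces with Saneblidze's Hochschild-theoretic description of free loop space cohomology recalled immediately above. First I would note that \(\DJ_{\Sigma}\) is simply connected and that its cohomology ring \(\kk[\Sigma]\) is of finite type over the principal ideal domain~\(\kk\), so Saneblidze's theorem yields a natural isomorphism of graded algebras
\begin{equation*}
  H^{*}(L\,\DJ_{\Sigma}) \cong \HH\bigl(C^{*}(\SimpOne{\DJ_{\Sigma}})\bigr),
\end{equation*}
where the product on the right is the hga-twisted Hochschild product built from the homotopy Gerstenhaber operations on~\(C^{*}(\SimpOne{\DJ_{\Sigma}})\).

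Next I would appeal to \Cref{thm:formality-DJ} together with the simplicial-to-topological comparison \Cref{thm:DJ-simp-top} to obtain an hga quasi-isomorphism \(C^{*}(\SimpOne{\DJ_{\Sigma}})\to\kk[\Sigma]\). Saneblidze's construction of the Hochschild product only involves the hga operations, the differential, and the bar coalgebra structure, so it is functorial with respect to morphisms of hgas, and an hga quasi-isomorphism induces a quasi-isomorphism of Hochschild complexes by a standard spectral-sequence comparison argument on the Hochschild bicomplex. This yields an isomorphism of graded algebras
\begin{equation*}
  \HH\bigl(C^{*}(\SimpOne{\DJ_{\Sigma}})\bigr) \cong \HH(\kk[\Sigma]).
\end{equation*}

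Finally, since \(\kk[\Sigma]\) carries the trivial hga structure (that is, \(E_{k}=0\) for all~\(k\ge1\)), Saneblidze's twisted product collapses, as already recalled in the paragraph preceding the statement, to the ordinary shuffle product on the Hochschild complex of a graded commutative algebra. Composing the two displayed isomorphisms produces the desired identification. Naturality with respect to vertex-preserving morphisms of simplicial posets is inherited at each step: from the naturality of \(\ffbar^{*}_{\Sigma}\) in \Cref{thm:formality-DJ-simp}, and from the naturality of Saneblidze's isomorphism together with the functoriality of \(\HH\).

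The main obstacle I anticipate is verifying that Saneblidze's (generally non-associative) hga-twisted product on Hochschild chains is genuinely invariant under hga quasi-isomorphisms, so that it really does induce a well-defined algebra isomorphism on cohomology. The footnote attached to the citation of Saneblidze already flags that some care with finite-type or Künneth hypotheses is needed, and establishing functoriality of the Hochschild product across an hga quasi-isomorphism is a stronger property than the corresponding statement at the level of the underlying chain complex.
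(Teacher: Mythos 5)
Your proposal is correct and follows essentially the same route as the paper: Saneblidze's Hochschild-chain model for \(H^{*}(L\,\DJ_{\Sigma})\), the hga formality map to \(\kk[\Sigma]\) (which, being a strict hga morphism into a trivial hga, kills every term of Saneblidze's product formula except the shuffle term), and a word-length filtration argument to see the induced map of Hochschild complexes is a quasi-isomorphism. The obstacle you flag at the end does not really arise, since no homotopy invariance of the twisted product under general hga quasi-isomorphisms is needed—only strict multiplicativity of the map induced by the particular hga morphism \(\ffbar^{*}_{\Sigma}\), which is exactly what the paper checks directly on Saneblidze's formula.
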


\begin{proof}
  This is analogous to the proof of \Cref{thm:iso-based-loops}. Note that like Baues, Saneblidze
  uses the `opposite' definition for the hga operations as described in \Cref{rem:other-hga-structure}.
  In his product formula~\cite[eq.~(6.5)]{Saneblidze:2009}, all terms except one
  are annihilated by the formality map
  \begin{equation}
    C^{*}(\SimpOne{\DJ_{\Sigma}}) \longrightarrow C^{*}(\SimpDJ_{\Sigma}) \stackrel{\ffbar^{*}_{\Sigma}}{\longrightarrow} \kk[\Sigma]
  \end{equation}
  since they contain hga operations.
  The remaining term, which corresponds to~\(p=0\) in the first sum of the formula, appears with the same sign as in the shuffle product.
  Hence the quasi-iso\-mor\-phism from the Hochschild chains for~\(C^{*}(\SimpOne{\DJ_{\Sigma}})\) to those for~\( \kk[\Sigma]\)
  is multiplicative.
\end{proof}

Let \(\ZZ_{\Sigma}\) be the moment-angle complex associated to a simplicial complex~\(\Sigma\),
and let \(K\subset T\) be a freely acting closed subgroup.
The space~\(X=\ZZ_{\Sigma}/K\) is often called a partial quotient
(at least if \(K\) is a subtorus). It comes with a canonical action of the torus~\(L=T/K\).
The topological analogue of the Jurkiewicz--Danilov theorem can be stated
by saying that the map~\(H_{L}^{*}(X)=H^{*}(\DJ_{\Sigma})\to H^{*}(X)\)
is surjective if \(X\) is a toric manifold.
The following loop space analogue applies to many more partial quotients.

\begin{proposition}
  \label{thm:loops-partial-quotient}
  Let \(\Sigma\) be a simplicial complex with vertex set equal to~\(V\).
  Let \(X=\ZZ_{\Sigma}/K\) be a partial quotient and set \(L=T/K\).
  Then, as graded algebras,
  \begin{equation*}
    H^{*}(\Omega X) = H^{*}(\Omega\,\DJ_{\Sigma}) \bigm/ H^{>0}(L)\cdot H^{*}(\Omega\,\DJ_{\Sigma}).
  \end{equation*}
\end{proposition}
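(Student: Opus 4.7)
The plan is to realise $\Omega X$ as the fibre of a loop fibration over the torus $L$ coming from the partial quotient structure, express both loop space cohomologies as Tor-algebras via Baues's isomorphism and hga formality of $DJ_{\Sigma}$, and then deduce the quotient formula from a Koszul change-of-rings argument.

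Since $K$ acts freely on $\ZZ_{\Sigma}$, one has $DJ_{\Sigma}\simeq EL\times_{L} X$, giving a fibration $X\to DJ_{\Sigma}\to BL$ and, after looping, the fibre sequence $\Omega X\to\Omega\,DJ_{\Sigma}\to L$. By the Baues isomorphism~\eqref{eq:cohomology-loops} together with \Cref{thm:formality-DJ}, we have $H^{*}(\Omega\,DJ_{\Sigma}) = H^{*}(\BB\,\kk[\Sigma]) = \Tor_{\kk[\Sigma]}(\kk,\kk)$, and the map $\Omega\,DJ_{\Sigma}\to L\simeq\Omega BL$ induces the algebra map $H^{*}(L) = \Tor_{\kk[L]}(\kk,\kk)\to\Tor_{\kk[\Sigma]}(\kk,\kk)$ coming functorially from the cohomology map $\kk[L] = H^{*}(BL)\to\kk[\Sigma]$.

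To compute $H^{*}(\Omega X)$, I apply the Eilenberg--Moore theorem to $X\to DJ_{\Sigma}\to BL$ together with hga formality of both $DJ_{\Sigma}$ and $BL$ to obtain a quasi-isomorphism $C^{*}(X)\simeq\kk[\Sigma]\otimes^{L}_{\kk[L]}\kk$ of hgas. Freeness of the $K$-action is equivalent, by the standard criterion for free torus actions on moment-angle complexes, to the regularity in $\kk[\Sigma]$ of the images of a basis of $\kk[L]^{2}$; consequently the derived tensor product collapses to $\kk[\Sigma]/J$ with $J = \pi^{*}(\kk[L]^{>0})\cdot\kk[\Sigma]$, and a further application of Baues gives $H^{*}(\Omega X) = \Tor_{\kk[\Sigma]/J}(\kk,\kk)$.

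Finally, the Koszul complex $\kk[\Sigma]\otimes\Lambda(\xi_{1},\dots,\xi_{\rank L})$ with $d\xi_{i} = \pi^{*}(y_{i})$ is a free resolution of $\kk[\Sigma]/J$ over $\kk[\Sigma]$, yielding a change-of-rings spectral sequence with $E_{2}$-page $H^{*}(L)\otimes\Tor_{\kk[\Sigma]/J}(\kk,\kk)$ converging to $\Tor_{\kk[\Sigma]}(\kk,\kk)$. Since $H^{*}(L)$ is exterior with odd-degree generators, a bigrading argument forces collapse and excludes multiplicative extensions. Identifying $H^{*}(L)$ with its image in $H^{*}(\Omega\,DJ_{\Sigma})$ and quotienting by the ideal generated by $H^{>0}(L)$ then produces the claimed isomorphism. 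The main obstacle is controlling multiplicativity throughout, particularly tracking the Hopf algebra structure on the bar constructions through the change-of-rings spectral sequence so that the end result is an algebra isomorphism rather than merely a module one.
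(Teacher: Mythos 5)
There is a genuine gap in your third step, and it is exactly the point the proposition is designed to get around. You claim that freeness of the $K$-action is equivalent to the images of a basis of $H^{2}(BL)$ forming a regular sequence in $\kk[\Sigma]$, so that the Eilenberg--Moore derived tensor product collapses to $\kk[\Sigma]/J$ and $H^{*}(\Omega X)=\Tor_{\kk[\Sigma]/J}(\kk,\kk)$. This is false: freeness of the action only gives $X_{L}\simeq \DJ_{\Sigma}$, i.e.\ $H^{*}_{L}(X)\cong\kk[\Sigma]$; it does not make $\kk[\Sigma]$ free (or flat) over $\kk[L]$. Already the extreme case $K=1$ is a counterexample: the trivial group acts freely, $X=\ZZ_{\Sigma}$, $L=T$, and then $\kk[\Sigma]/J=\kk$ while $H^{*}(\ZZ_{\Sigma})=\Tor_{\kk[V]}(\kk[\Sigma],\kk)$ is in general far from trivial, so neither $H^{*}(X)\cong\kk[\Sigma]/J$ nor $H^{*}(\Omega X)=\Tor_{\kk[\Sigma]/J}(\kk,\kk)$ holds. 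The surjectivity of $H^{*}_{L}(X)\to H^{*}(X)$ that your collapse would entail is the Jurkiewicz--Danilov property, which the surrounding text points out holds only for special partial quotients such as toric manifolds, whereas the proposition is stated (and true) for all free $K$. A symptom of the problem is that your argument never uses the hypothesis that the vertex set of $\Sigma$ is all of $V$; without some such input the formula can fail. (There are also secondary issues: deducing $H^{*}(\Omega X)=\Tor_{H^{*}(X)}(\kk,\kk)$ would require formality of $C^{*}(X)$, not just knowledge of its cohomology ring, and the asserted collapse of the change-of-rings spectral sequence ``because the generators are odd'' is not justified.)

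The paper's proof avoids all of this and is much softer: it never computes $H^{*}(\Omega X)$ by Eilenberg--Moore at all. Instead, after looping $X\hookrightarrow X_{L}\to BL$ to get $\Omega X\to\Omega X_{L}\to L$, it uses the hypothesis on the vertex set to see that $H^{2}(BL;\Z)\to H^{2}(\DJ_{\Sigma};\Z)$ is split injective, hence $\pi_{1}(\Omega X_{L})\to\pi_{1}(L)$ is surjective; this produces a section $L\to\Omega X_{L}$ of topological groups, so $\Omega X_{L}\simeq\Omega X\times L$ as spaces. Then $H^{*}(\Omega X_{L})\to H^{*}(\Omega X)$ is a surjective algebra map and Leray--Hirsch identifies its kernel with the ideal generated by $H^{>0}(L)$, which is the claimed formula. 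If you want to salvage your Tor-theoretic route, you would have to replace the collapse claim by an argument at the level of bar constructions (e.g.\ a change-of-rings for $\Tor_{\kk[\Sigma]}(\kk,\kk)$ over $\Tor_{\kk[L]}(\kk,\kk)=H^{*}(L)$), but that essentially amounts to re-deriving the splitting the paper obtains geometrically.
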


This is again an isomorphism of bialgebras
if the homology of~\(\Omega X\) is free over~\(\kk\).
Note that the by \Cref{thm:iso-based-loops} and naturality,
the map~\(H^{*}(L)\to H^{*}(\Omega\DJ_{\Sigma})\) corresponds to the composition
\begin{equation}
  \Tor_{H^{*}(BL)}(\kk,\kk) \to \Tor_{H^{*}(BT)}(\kk,\kk) \to \Tor_{\kk[\Sigma]}(\kk,\kk).
\end{equation}

\begin{proof}
  We start by observing that we have
  \begin{equation}
    X_{L} = (\ZZ_{\Sigma})_{T} \simeq\DJ_{\Sigma}
  \end{equation}
  because \(K\) acts freely on~\(\ZZ_{\Sigma}\).

  Looping the fibration~\(X\hookrightarrow X_{L}\to BL\) gives the fibration
  \begin{equation}
    \Omega X \hookrightarrow \Omega X_{L} \to \Omega BL \simeq L.
  \end{equation}  
  Because the vertex set of~\(\Sigma\) equals \(V\), the map
  \begin{equation}
    H^{2}(BL;\Z)\to H_{L}^{2}(X;\Z) \cong H^{2}(\DJ_{\Sigma};\Z)\cong H^{2}(BT;\Z)
  \end{equation}
  is split injective. Hence the map
  \( 
    \pi_{1}(\Omega X_{L})\to \pi_{1}(\Omega BL) = \pi_{1}(L)
  \) 
  is surjective.
  This gives rise to a section~\(L\to\Omega X_{L}\) of topological groups,
  so that
  \begin{equation}
    \Omega X_{L}\simeq\Omega X\times L
  \end{equation}
  as spaces, \cf~\cite[p.~331]{BuchstaberPanov:2015}.
  Consequently, the map \(H^{*}(\Omega X_{L})\to H^{*}(\Omega X)\)
  is surjective, and the Leray--Hirsch theorem applies.
\end{proof}

\end{document}